\def\be{\begin{array}}
\def\en{\end{array}}
\numberwithin{equation}{section}
\newtheorem{theorem}{Theorem}[section] 
\newtheorem{definition}{Definition}[section]
\newtheorem{coro}{Corollary}[section]
\newtheorem{lemma}{Lemma}[section]
\newtheorem{remark}{Remark}[section]
\newtheorem{proposition}{Proposition}[section]
\newcommand{\R}{{\mathbb R}}
\newcommand{\pozhehao}{\kern0.3ex\rule[0.8ex]{1.5em}{0.095ex}\kern0.3ex}
\def\x#1{{\rm (\ref{#1})}}
\begin{document}
\title{\LARGE\bf{Global Regularity and instability for the incompressible non-viscous Oldroyd-B model}
\footnote{}}
\date{}
\author{Zhi Chen$^{1}$\thanks{{\small E-mail: zhichenmath@csu.edu.cn (Z. Chen)}},
 ~~Weikui Ye$^{2}$\thanks{{\small E-mail: 904817751@qq.com (W. Ye)}},~~
 Zhaoyang Yin $^{3}$\thanks{{\small E-mail:  mcsyzy@mail.sysu.edu.cn (Z. Yin)}},\\
{\small 1. School of Mathematics and Statistics, Central South University,}\\
{\small Changsha, 410083 Hunan, P. R. China}\\
{\small 2. $\mbox{Institute}$ of Applied Physics and Computational Mathematics,}\\
{\small P.O. Box 8009, Beijing 100088, P. R. China}\\
{\small 3.  Department of Mathematics, Sun Yat-sen University,}\\
{\small Guangzhou, 510275 Guangdong, P. R. China}\\
}\maketitle

\begin{center}
\begin{minipage}{15.5cm}

{\bf Abstract.} In this paper, we consider the 2-dimensional non-viscous Oldroyd-B model.
In the case of the ratio equal 1~($\alpha=0$), it is a difficult case since the velocity field $u(t,x)$ is no longer decay. Fortunately, by {observing the exponential decay} of the stress tensor $\tau(t,x)$, we succeeded in proving the global existence for this system with some large initial data.
Moreover, we give an unsteady result: when the ratio is close to 1~($a\rightarrow 0$), the system is not steady for large time. This implies an interesting physical phenomenon that the term $a\mathbb{D}u$ is a bridge between the transformation of kinetic energy $u$ and elastic
potential energy $\tau$, but this process is transient for large time, which leads the instability.

\vskip4mm
 \noindent
{\bf Keywords:} Oldroyd-B model; exponential decay; global solutions; instability.\\

{\bf 2010 Mathematics Subject Classification.}  35Q35; 35A01; 35A02; 35B45
\end{minipage}
\end{center}
\vskip 6mm

\section{Introduction and main results}
 \setcounter{equation}{0}
\par
In this paper, we study the incompressible Oldroyd-B model of the non-Newtonian fluid in $\R^+\times\R^d$
\begin{equation}\label{gs}
\begin{cases}
\partial_tu+(u\cdot\nabla)u-\nu\Delta u+\nabla p=\mu_1\mathrm{div}(\tau),\\
\partial_t\tau+(u\cdot\nabla)\tau-\eta\Delta\tau+\mu_2\tau+\mathrm{Q}(\nabla u,\tau)=\alpha\mathbb{D}u,\\
\mathrm{div} u=0,\\
u(x,0)=u_0(x),~~~\tau(0,x)=\tau_0(x),
\end{cases}
\end{equation}
where $u$ denotes the velocity, $\tau=\tau_{i,j}$ is the non-Newtonian part of the stress tensor($\tau$ is a $d\times d$ symmetric matrix here)
and $p$ is a scalar pressure of fluid.
 $\mathrm{D}(u)$ is the symmetric part of the velocity gradient,
 \begin{equation*}
   \mathrm{D}(u)=\frac{1}{2}(\nabla u+(\nabla u)^T).
 \end{equation*}
  The $\mathrm{Q}$ above is a given bilinear form:
\begin{equation*}
 \mathrm{Q}(\tau,\nabla u)=\tau\Omega(u)-\Omega(u)\tau+b(\mathrm{D}(u)\tau+\tau \mathrm{D}(u)),
\end{equation*}
where $b$ is a parameter in $[-1,1],$ $\Omega(u)$ is the skew-symmetric part of $\nabla u,$ i.e.
\begin{equation*}
  \Omega(u)=\frac{1}{2}(\nabla u-(\nabla u)^T).
\end{equation*}
The parameters $\nu,\eta,\mu_1,\alpha$ are non-negative and they are specific to the characteristic of
the considered material. In \cite{LM}, $\nu,\mu_2$ and $b$ correspond respectively to $\theta/Re, 1/We$ and $2(1-\theta)/(WeRe),$
where $Re$ is the Reynolds number, $\theta$ is the ratio between the relaxation and retardation times and $We$ is
the Weissenberg number.
In this paper, we will investigate  the case $\alpha=0$. Following \cite{CM,MR1633055},we claim that the limit model $\alpha=0$ occurs when
$\theta$, the ratio between the so called relaxation and retardation times, is converging to 1.
We specifically pointed out some different Oldroyd models with infinite Weissenberg number (and so $\alpha,\mu_2=0$) have been investigated in \cite{HY}.
\par
The Oldroyd-B model describes the motion of some viscoelastic flows. Formulations about viscoelastic flows of Oldroyd-B type are first
established by Oldroyd in \cite{OJ}. For more detailed physical background and derivations about this model, we refer the readers to \cite{BCF,PM,139,OJ}.
\par
The well-posedness of the system \x{gs} has been {extensively and continuously studied.} Guillop\'{e} and Saut \cite{GS1,GS2} obtained the local solutions
with large initial data and showed that these solutions are global when the coupling and the initial data are small enough.
In the corotational case, i.e. $b=0,$ Lions and Masmoudi established the existence of global weak solution in \cite{LM}.

  Many works have been devoted to find the global solution in the case
of small data after the work of F. Lin et al.\cite{MR2165379}. Chemin and Masmoudi \cite{CM}
first obtained the local solutions and global small solutions  {in the critical Besov spaces} when $\nu>0, \mu_1>0, \alpha>0,$ and $\eta=0.$ { They get the global small solutions when the  initial and coupling parametra is small, i.e.($\mu_1\alpha\leq c\mu_2\nu$).
The condition $\mu_1\alpha\leq c\mu_2\nu$ means that coupling effect between the two equation is less important than the viscosity.}
{Inspired} by the work \cite{CW,CMZ}, Zi, Fang and Zhang improved their results in the critical $L^p$ framework for the case
of non-small coupling parameters in \cite{ZFZ}.
\par
Some recent results dealt with the case when there is only kinematic dissipation or stress
tensor dissipation,i.e. the systems do not have damping term. Zhu \cite{145} got small global smooth solutions of the $\mathrm{3D}$ Oldroyd-B model with $\eta=0, \mu_2=0$ by observing the linearization of the system satisfies the damped wave equation. Inspired by the work of Zhu \cite{145} and Danchin in \cite{DR}, Chen and Hao \cite{CH2} extended this small data global solution in Sobolev spaces to the critical Besov spaces.    Wu and Zhao \cite{WZ} and Zhai \cite{ZYZ2}  established the small data global well-posedness in critical Besov spaces for fractional dissipation of velocity respectively. Moreover, Zhai \cite{zhai} constructs global solutions for a class of highly oscillating initial velocities by observing the special structure of the system.
\par
When the damping mechanism and the Laplacian dissipation
for $\tau$ exist,  Elgindi and Rousset \cite{ER} established a global large solution in a certain sense by building a new quantity {to avoid singular operators.}
Later, Liu and Elgindi extend these results to the 3D in \cite{135}. Constantin, Wu, Zhao and Zhu \cite{wjh8} established the small data global solution to the n-dimensional(n=2,3) in the case of no damping mechanism and  general {tensor} dissipation. Recently, Wu and Zhao \cite{WZ2} investigated the small data global well-posedness in Besov spaces for fractional {tensor} dissipation. Very recently, Chen, Liu, Qin and Ye \cite{CZL}
improved their results in the critical $L^p$ framework by using Lagrangian coordinates in the high frequency.
\par
It is worth pointing that the exist of $\mathrm{D}u$ is key for the proof of the above results. A nature question is that
Whether the system has a global solution when the {$\alpha=0$} in \eqref{gs}? Since the $\mathrm{D}u$ play a important role by
the equality
\begin{equation*}
  \int_{\R^2} \mathrm{div}\tau\cdot udx+\int_{\R^2} Du:\tau=0.
\end{equation*}
Moreover, the term $\mathrm{D}u$ ensures the damping term of $\Gamma$ in paper \cite{ER}, in the work of \cite{wjh8},
the $\mathrm{D}u$ guarantee the  linearized system satisfy the damped wave structure. In this paper, we will give a positive answer
by the Theorem 2. As far as {we} know, this is the first study the case of $\alpha=0$ in \eqref{gs} with any $b\in [-1,1]$ in $Q(\nabla u, \tau)$.
\par
When $\alpha=0,$ the system will become interesting which do not enjoy the special structure similar to \cite{wjh8}.
 In this paper, We can get a  well-posedness  result in the case of $\alpha=0.$
We also study how the global strong solutions of \eqref{gs} behave when the ratio $\theta$ trends to 1, (i.e. $a\rightarrow 0$). Without lose of generality, we let the  $\nu=0, \mu_1=1, \eta=1$ and $\mu_2=1$, then \eqref{gs} becomes:
\begin{equation}\label{gs2222}
\begin{cases}
\partial_tu+(u\cdot\nabla)u+\nabla p=\mathrm{div}(\tau),\\
\partial_t\tau+(u\cdot\nabla)\tau-\Delta\tau+\tau+\mathrm{Q}(\nabla u,\tau)=a\mathbb{D}u,\\
\mathrm{div} u=0,\\
u(x,0)=u_0(x),~~~\tau(0,x)=\tau_0(x),
\end{cases}
\end{equation}
In \cite{ER}, Elgindi and Rousset established a global large solution by building a new quantity $\Gamma=w-\frac{curl div}{\Delta}\tau$ and finding a $H^1$ estimation for small initial data:
\begin{equation*}
  a\|u(t)\|_{L^{\infty}_t(H^1)}+\|\tau(t)\|_{L^{\infty}_t(H^1)}
+\|\tau\|_{L^2_t(H^2)}\leq C\|(u_0,\tau_0)\|_{H^1}
\end{equation*}
Recently, Deng, Luo and Yin \cite{deng2021global} obtain the global small solutions for \eqref{gs2222} with $\mathrm{Q}(\tau,\nabla u)=\tau\Omega(u)-\Omega(u)\tau+b(\mathrm{D}(u)\tau+\tau \mathrm{D}(u)),~b=0$, and give an estimation of the $H^1$ decay for the global solutions with any $b\in [-1,1]$ . Combining their results, and after some calculation, one can get the following theorem:
\begin{theorem}\cite{ER,deng2021global}\label{Th1}
Let $(u_0,\tau_0)\in H^s(\mathbb{R}^2),s>2$. Assume that
$$\|u_0\|_{H^1}\leq \epsilon_0,~\|\tau_0\|_{H^1}\leq a\epsilon_0~~and~~\|w_0\|_{{{B}^{0}_{\infty,1}}}
+\|\tau_0\|_{{{B}^{0}_{\infty,1}}}\leq \varepsilon_0,$$
then the systems \eqref{gs2222} exists a unique global solution $(u,\tau)\in C([0,+\infty),H^s)$. Moreover, if $(u_0,\tau_0)\in L^1$, then we have
$$\|(u,\tau)(t)\|_{H^1}\leq C\|(u_0,\tau_0)\|_{H^1}[a(1+t)]^{-\frac{1}{2}}.$$
\end{theorem}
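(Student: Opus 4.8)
\textit{Proof strategy.} This statement is an amalgam of the global existence theorem of Elgindi--Rousset \cite{ER} and the decay estimate of Deng--Luo--Yin \cite{deng2021global}, so the plan is to recall the two mechanisms behind them and indicate the (mostly routine) bookkeeping needed to run them together at the level of $H^s$ with $s>2$. The skeleton is: (i) a weighted energy identity that reduces everything to controlling $\int_0^t\|\nabla u\|_{L^\infty}$; (ii) the Elgindi--Rousset corrected vorticity $\Gamma$, which turns the missing dissipation of $u$ into a transport equation with damping and no derivative loss, closing a bootstrap under the stated smallness; (iii) propagation of $H^s$-regularity and uniqueness by standard means; (iv) the low-frequency ``effective viscosity $\propto a$'' mechanism giving the algebraic decay when $(u_0,\tau_0)\in L^1$.

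First I would record the basic energy identity. Multiplying the $u$-equation of \eqref{gs2222} by $au$ and the $\tau$-equation by $\tau$, integrating over $\mathbb{R}^2$, and using $\mathrm{div}\,u=0$ together with the symmetry of $\tau$, the two coupling terms $\pm a\int \mathbb{D}u:\tau$ cancel and one is left with
\begin{equation*}
\frac{d}{dt}\big(a\|u\|_{L^2}^2+\|\tau\|_{L^2}^2\big)+2\|\nabla\tau\|_{L^2}^2+2\|\tau\|_{L^2}^2=-2\int_{\mathbb{R}^2}\mathrm{Q}(\nabla u,\tau):\tau ,
\end{equation*}
where the right-hand side is bounded by $C\|\nabla u\|_{L^\infty}\|\tau\|_{L^2}^2$ because the antisymmetric part $\tau\Omega(u)-\Omega(u)\tau$ of $\mathrm{Q}$ drops out under the contraction with $\tau$. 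The same computation applied to $\nabla u,\nabla\tau$ (and later to $\Lambda^s u,\Lambda^s\tau$) gives the $H^1$- (resp. $H^s$-) version, at the price of terms of the form $\|\nabla u\|_{L^\infty}\|(u,\tau)\|_{H^s}^2$ plus Kato--Ponce commutator contributions. Thus the whole argument hinges on an a priori bound for $\int_0^t\|\nabla u\|_{L^\infty}$ — and here the $u$-equation carries no regularizing term.

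To control $\|\nabla u\|_{L^\infty}$ I would follow \cite{ER}: let $w=\mathrm{curl}\,u$, which solves $\partial_t w+u\cdot\nabla w=\mathrm{curl\,div}\,\tau$, and set $\Gamma=w-\mathcal R\tau$ with $\mathcal R:=\frac{\mathrm{curl\,div}}{\Delta}$. Applying $\mathcal R$ to the $\tau$-equation, using $\mathrm{div}\,\mathbb{D}u=\tfrac12\Delta u$ (so $\mathcal R\,\mathbb{D}u=\tfrac12 w$) and $\mathcal R\Delta\tau=\mathrm{curl\,div}\,\tau$ to cancel the singular top-order term, one finds a transport equation with damping,
\begin{equation*}
\partial_t\Gamma+u\cdot\nabla\Gamma+\tfrac a2\Gamma=\big(1-\tfrac a2\big)\mathcal R\tau+\mathcal R\,\mathrm{Q}(\nabla u,\tau)+[\mathcal R,\,u\cdot\nabla]\tau ,
\end{equation*}
in which $\mathcal R$ is a zeroth-order Calder\'on--Zygmund operator (no derivative lost on $\tau$), the commutator is handled by standard commutator estimates, and the $\mathrm Q$-term is absorbed by smallness. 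Running this estimate in $B^0_{\infty,1}$ (which embeds in $L^\infty$ and is stable under transport and CZ operators), together with the parabolic smoothing of the $\tau$-equation in $B^0_{\infty,1}$ (yielding $\tau\in L^1_tB^2_{\infty,1}$, which tames the $a\mathbb D u$ forcing) and the Biot--Savart bound $\|\nabla u\|_{L^\infty}\lesssim\|u\|_{L^2}+\|w\|_{B^0_{\infty,1}}\le\|u\|_{L^2}+\|\Gamma\|_{B^0_{\infty,1}}+C\|\tau\|_{B^0_{\infty,1}}$, closes a bootstrap under the smallness of $\|u_0\|_{H^1}$, $\|\tau_0\|_{H^1}/a$ and $\|w_0\|_{B^0_{\infty,1}}+\|\tau_0\|_{B^0_{\infty,1}}$; this is precisely the estimate $a\|u\|_{L^\infty_tH^1}+\|\tau\|_{L^\infty_tH^1}+\|\tau\|_{L^2_tH^2}\le C\|(u_0,\tau_0)\|_{H^1}$ of \cite{ER}, and in particular it makes $\int_0^\infty\|\nabla u\|_{L^\infty}\,dt$ finite. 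Granted this, propagation of $H^s$-regularity ($s>2$) is routine: local well-posedness and uniqueness come from a contraction argument in $C([0,T];H^s)$, and the $H^s$ energy inequality above combined with Gronwall (plus parabolic regularity of $\tau$) prevents blow-up, so the solution is global.

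For the last claim, assume in addition $(u_0,\tau_0)\in L^1$. The combined quantity $a\|u\|_{L^2}^2+\|\tau\|_{L^2}^2$ dissipates only $\|\tau\|_{H^1}^2$ and not any norm of $u$, so the decay must be extracted from the coupling: at low frequencies the $\tau$-equation gives $\widehat\tau\approx\frac{a}{1+|\xi|^2}\,\widehat{\mathbb D u}$, which fed into the $u$-equation produces an effective heat operator $\partial_t\widehat u\approx-\frac{a|\xi|^2}{2(1+|\xi|^2)}\widehat u$, i.e. effective viscosity of size $a$; the heuristic rate $(at)^{-d/4}=(at)^{-1/2}$ in $d=2$ is exactly what appears. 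Implementing this rigorously by a Fourier-splitting / Schonbek argument on the annulus $|\xi|^2\sim[a(1+t)]^{-1}$ — using the $L^1\to L^\infty$ control of the data in Fourier on the low region, the $\tau$-dissipation on the complementary region, and treating the nonlinear terms as genuinely higher order thanks to the global bound just obtained — yields $\|(u,\tau)(t)\|_{L^2}\lesssim\|(u_0,\tau_0)\|_{H^1}[a(1+t)]^{-1/2}$; interpolating with the uniform $H^s$-bound (and the parabolic gain of one derivative for $\tau$) upgrades this to the stated $H^1$ decay.

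\textit{Main obstacle.} The genuine difficulty is the a priori $L^\infty$-control of $\nabla u$ with no regularizing term in the $u$-equation: the naive approach produces the singular operator $\mathrm{curl\,div}\,\Delta^{-1}$ acting on $\tau$ at top order, and it is the $\Gamma$-substitution that removes it — but checking that the resulting bootstrap closes, in particular that the $a\mathbb D u$ forcing of $\tau$ does not spoil the $B^0_{\infty,1}$ estimate and that the $\mathrm Q$-terms are truly absorbable, forces one to use the smallness hypotheses carefully and to keep the $a$-weighting of $u$ throughout. The decay step has a milder analogue of the same issue: with no dissipation of $u$ in the energy, Schonbek's method cannot be run naively and must be routed through the coupling (equivalently through $\Gamma$) to expose the effective $a$-viscosity.
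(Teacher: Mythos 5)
The paper itself gives no proof of Theorem \ref{Th1}: it is imported verbatim from \cite{ER,deng2021global} with the remark that it follows from ``combining their results, and after some calculation,'' so there is no internal argument to compare yours against. Your sketch faithfully reconstructs the two mechanisms those references rely on and that the introduction of this paper alludes to: the weighted energy identity (with the cancellation $a\int \mathrm{div}\,\tau\cdot u + a\int \mathbb{D}u:\tau=0$ and the pointwise vanishing of the corotational part of $\mathrm{Q}$ against $\tau$), the Elgindi--Rousset quantity $\Gamma=w-\frac{\mathrm{curl\,div}}{\Delta}\tau$ whose transport--damping equation you derive correctly (including the identity $\frac{\mathrm{curl\,div}}{\Delta}\mathbb{D}u=\frac12 w$ and the resulting $-\frac a2\Gamma$ damping), and a Schonbek-type low-frequency argument exposing the effective viscosity of size $a$ for the $L^1$--based decay $[a(1+t)]^{-1/2}$. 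This is consistent with the $H^1$ bound $a\|u\|_{L^\infty_t(H^1)}+\|\tau\|_{L^\infty_t(H^1)}+\|\tau\|_{L^2_t(H^2)}\leq C\|(u_0,\tau_0)\|_{H^1}$ quoted from \cite{ER}; the only caveat is that your argument is a strategy outline rather than a complete proof (the closure of the $B^0_{\infty,1}$ bootstrap and the precise tracking of the $a$-dependence in the Fourier-splitting step are exactly the ``some calculation'' the paper defers to the cited works).
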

In this paper, we consider the global existence for \eqref{gs2222} with $a=0$ in critical Besov space. The main difficulty is that the ratio is 1 ($a=0$). Since \cite{wjh8} tells us that the term $a\|\mathbb{D}u\|_{H^s}$ has disspative effect, which leads to the polynomial decay of $\|\mathbb{D}u\|_{H^s},~s\geq 0.$ This implies that when $a=0$, $\|\mathbb{D}u\|_{H^s}$ may grow! That causes the main difficulty to obtain the global existence of \eqref{gs2222}. However, we find that $\|\tau(t)\|_{L^{2}\cap L^{\infty}}$  is exponential decay for small initial data, which can offset the exponential growth effect due to the Gronwall inequality. Then, combing the critical estimation of the transport equation in Besov space and the bootstrap argument, we obtain the global existence of \eqref{gs2222}. Compared with the results in \cite{ER,deng2021global}, we needn't the small condition of $\|u_0,\tau_0\|_{\dot{H}^1}$. Finally, we obtain the large time behavior such that $\|\tau(t)\|_{B^{\frac{d}{p}}_{p,1}}$ and $\|u(t)\|_{B^{1+\frac{d}{p}}_{p,1}}$ are double exponential growth~(see \eqref{see1}-\eqref{see2} ). Here is our first result:
  \begin{theorem}\label{Th2}
 	 {\it
 Let $(u_0,\tau_0)\in (B^{1+\frac{2}{p}}_{p,1},B^{\frac{2}{p}}_{p,1}),~1\leq p\leq\infty$. If
\begin{equation}\label{th2}
  \|(u_0,\tau_0)\|_{L^2}+\|w_0\|_{{{B}^{0}_{\infty,1}}}
  +\|\tau_0\|_{{{B}^{0}_{\infty,1}}}\leq 4\varepsilon_0,~~w_0=\mathrm{curl}u_0,~ \varepsilon_0=\frac{1}{64(C^4+1)},
\end{equation}
then there exists a unique global solution $(u,\tau)\in C_T(B^{1+\frac{d}{p}}_{p,1})\times C_T(B^{\frac{d}{p}}_{p,1})\cup L^1_T(B^{2+\frac{d}{p}}_{p,1})$ for system \eqref{gs2222} with $a=0$. Moreover, we have
$$\|\tau\|_{L^2\cap L^{\infty}}\leq C e^{-\frac{1}{72}t}\|\tau_0\|_{L^2\cap L^{\infty}}.$$
}
\end{theorem}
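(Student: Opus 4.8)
\section*{Proof proposal}

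The plan is to combine a local well-posedness theory in the critical Besov spaces with a continuation criterion, and then to propagate a global a priori bound whose engine is the exponential decay of $\|\tau(t)\|_{L^2\cap L^\infty}$. First I would establish local existence and uniqueness of a maximal solution $(u,\tau)$ on some $[0,T^*)$ in the class $C([0,T^*);B^{1+\frac2p}_{p,1})\times(C([0,T^*);B^{\frac2p}_{p,1})\cap L^1_{loc}(B^{2+\frac2p}_{p,1}))$ by a Friedrichs/iteration scheme, using the critical estimate for the transport equation for $u$ and the critical estimate for the transport–heat equation with damping for $\tau$; the damping term $+\tau$ is what makes the semigroup $e^{\sigma(\Delta-1)}$ decay, which will be used repeatedly. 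This scheme also gives a continuation criterion of the form: if $T^*<\infty$ then $\int_0^{T^*}\|\nabla u\|_{L^\infty}\,dt=\infty$ and the critical norms blow up. Hence it suffices to show that $\int_0^{t}\|\nabla u\|_{L^\infty}\,ds<\infty$ and that the critical Besov norms are finite on every finite interval.

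The heart of the matter is the decay of $\tau$, for which two structural observations are used. The rotational part of $\mathrm Q$ is pointwise orthogonal to $\tau$, i.e. $(\tau\Omega(u)-\Omega(u)\tau):\tau=0$, so in any $\tau$-weighted estimate only the term $2b\,\mathbb D u:\tau^2$ survives; and this surviving term can be integrated by parts to move the velocity derivative onto $\tau$, producing a factor $\|\nabla\tau\|_{L^2}$ that is absorbed by the parabolic dissipation. This yields $\frac{d}{dt}\|\tau\|_{L^2}^2+\|\nabla\tau\|_{L^2}^2+2\|\tau\|_{L^2}^2\lesssim\|u\|_{L^\infty}^2\|\tau\|_{L^2}^2$, hence exponential decay of $\|\tau\|_{L^2}$ once $\|u\|_{L^\infty}$ is suitably small, and upon time-integration $\int_0^\infty\|\nabla\tau\|_{L^2}\,dt\lesssim\varepsilon_0$. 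For the $L^\infty$ bound I would use the Duhamel formula for $\partial_t\tau-\Delta\tau+\tau=-\mathrm{div}(u\otimes\tau)-\mathrm Q$, together with the smoothing inequalities $\|e^{\sigma\Delta}\mathrm{div}\,f\|_{L^\infty}\lesssim\sigma^{-1/2}\|f\|_{L^\infty}$ and $\|e^{\sigma\Delta}g\|_{L^\infty}\lesssim\sigma^{-1/r}\|g\|_{L^r}$ and the exponential factor $e^{-\sigma}$ from the damping, which reduces the problem to a singular Gronwall inequality for $\|\tau(t)\|_{L^\infty}$ whose coefficients involve $\|u\|_{L^\infty}$ and time-uniformly bounded velocity norms; closing it gives $\|\tau(t)\|_{L^\infty}\lesssim e^{-t/72}\|\tau_0\|_{L^\infty}$. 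The same damping also upgrades the heat smoothing to an $L^1$-in-time gain of two derivatives, $\int_0^\infty\|\tau\|_{\dot B^{2}_{\infty,1}}\,dt\lesssim\|\tau_0\|_{B^0_{\infty,1}}+\int_0^\infty\|F\|_{B^0_{\infty,1}}\,dt$, and this is the mechanism that keeps $\|w\|_{L^p\cap L^\infty}$ bounded for all time via the measure-preserving two-dimensional transport $\partial_t w+u\cdot\nabla w=\mathrm{curl\,div}\,\tau$, while $\frac{d}{dt}\|u\|_{L^2}\lesssim\|\nabla\tau\|_{L^2}$ keeps $\|u\|_{L^2}$ small; together with Gagliardo–Nirenberg this gives $\|u(t)\|_{L^\infty}\lesssim\varepsilon_0^{\,\theta}$ small, closing the loop in the two $\tau$-estimates.

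With these in hand I would run a continuity/bootstrap argument: on $[0,T]$ assume $\|\tau(t)\|_{L^2\cap L^\infty}\le 2C\varepsilon_0e^{-t/72}$ and $\int_0^t\|\mathrm{curl\,div}\,\tau\|_{B^0_{\infty,1}}\,ds\le C\varepsilon_0$; deduce the boundedness/smallness statements above, and then the $L^2$ and $L^\infty$ estimates return $\|\tau(t)\|_{L^2\cap L^\infty}\le C\varepsilon_0e^{-t/72}$, so the hypotheses improve and propagate to $T=T^*$. For the velocity's higher norms I would use the Elgindi–Rousset good unknown $\Gamma=w-\frac{\mathrm{curl\,div}}{\Delta}\tau$, whose transport equation has an order-zero source, so that $w=\Gamma+\frac{\mathrm{curl\,div}}{\Delta}\tau$ and the critical Besov norms of $u$ obey Gronwall inequalities with amplification rate $\|\nabla u\|_{L^\infty}\lesssim\|w\|_{L^\infty}\log(e+\|w\|_{B^{\frac2p}_{p,1}})$; since $\|w\|_{L^\infty}$ is already bounded, this logarithmic Gronwall gives at most double-exponential growth of $\|u\|_{B^{1+\frac2p}_{p,1}}$, $\|\tau\|_{B^{\frac2p}_{p,1}}$ and $\|\tau\|_{L^1_t(B^{2+\frac2p}_{p,1})}$ (matching \eqref{see1}–\eqref{see2}) — in particular these are finite on every finite interval, so the continuation criterion forces $T^*=+\infty$, and the displayed decay is just the time-global bootstrap bound.

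The hard part is exactly this reconciliation. Unlike the case $a\neq0$, where $\frac{d}{dt}(a\|u\|_{L^2}^2+\|\tau\|_{L^2}^2)$ enjoys a cancellation that dissipates the whole energy, here the velocity is undamped and its high-regularity norms genuinely grow, so one cannot keep $\|\nabla u\|_{L^\infty}$ bounded in time; the argument must instead (i) use the pointwise cancellation together with the integration by parts to reduce the feedback of $\tau$ into its own equation to lower-order velocity norms, and (ii) exploit that the exponential decay of $\|\tau\|_{L^2\cap L^\infty}$ is quantitatively fast enough, with the explicit threshold $\varepsilon_0=\frac1{64(C^4+1)}$ and rate $\frac1{72}$, for the Duhamel and Gronwall integrals against the (merely bounded) velocity norms to converge and to beat the double-exponential growth. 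Carrying all these constants consistently through the bootstrap is where the real work lies.
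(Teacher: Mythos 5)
Your proposal follows essentially the same route as the paper: exponential decay of $\|\tau\|_{L^2\cap L^\infty}$ extracted from the damped heat structure (a weighted $L^2$ energy estimate plus a Duhamel/smoothing argument for $L^\infty$), boundedness of $\|u\|_{L^2}$ and of the vorticity via the transport equation with source $\mathrm{curl\,div}\,\tau\in L^1_t$, a bootstrap closing the $L^\infty_t(B^0_{\infty,1})\cap L^1_t(B^2_{\infty,1})$ bound on $\tau$ against the slowly growing velocity norms, and a continuation criterion yielding the double-exponential growth \eqref{see1}--\eqref{see2}. The only cosmetic difference is that the paper controls the critical norms directly from the vorticity equation rather than through the Elgindi--Rousset good unknown $\Gamma$, which is not needed here.
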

\begin{remark}\label{r1}
In \cite{Bourgain1,Bourgain2}, Bourgain and Li employed a combination of Lagrangian and Eulerian techniques to obtain strong local ill-posed results of the Euler
equation in $B^{\frac{d}{p}+1}_{p,r}$ with $p\in[1,\infty),~r\in(1,\infty],~d=2,3$. Recently, Guo, Li and Yin \cite{Lijfa} proved the Euler
equation is well-posed in $B^{\frac{d}{p}+1}_{p,1}$ with $p\in[1,\infty]$, which means that $B^{\frac{d}{p}+1}_{p,1}$ is the critical Besov space for the well-posedness of the Euler equation. Thus, since \eqref{gs2222} is the Euler equation when $\tau_0=0$, we conclude that $C([0,T];{B}^{\frac{d}{p}+1}_{p,1}(\mathbb{R}^d))\times \Big(C([0,T];{B}^{\frac{d}{p}}_{p,1}(\mathbb{R}^d))\cap L^1([0,T];{B}^{\frac{d}{p}+2}_{p,1}(\mathbb{R}^d))\Big)$ is also the critical Besov space for \eqref{gs2222} with $d=2,3,~p\in[1,\infty)$.
\end{remark}
\begin{remark}
Since $B^{\frac{2}{p}}_{p,1}\hookrightarrow B^{0}_{\infty,1},~p<\infty$. By \eqref{th2} we claim that our result includes some large initial data. For example,
choose $\varphi$ be a smooth, radial and non-negative function in $\mathbb{R}^2$ such that
\begin{equation}
\phi=\begin{cases}
1,~~for~|\xi|\leq 1,\\
0,~~for~|\xi|\geq 2.
\end{cases}
\end{equation}
Let $(u_0,\tau_0):=\frac{1}{N}(\psi,\varphi)$, where $\psi,\varphi\in S(\mathbb{R}^2)$, $div\psi=0$ and $F(\varphi)=(\phi(\xi-2^Ne),\phi(\xi-2^Ne))$ with $e=(1,1),N\in\mathbb{N}^+$. Then, one can easily deduce that
 $$\Delta_j\varphi=\varphi~~when~j=N;
 ~~~\Delta_j\varphi=0~~when~j\neq N.$$
So for sufficient large $N$ and $p<\infty$, we have
$$\|\tau_0\|_{H^1}\approx\frac{2^{N}}{N},~~
\|\tau_0\|_{B^{\frac{2}{p}}_{p,1}}\approx\frac{2^{\frac{2}{p}N}}{N},
~~but~~\|\tau_0\|_{B^{0}_{\infty,1}}+\|\tau_0\|_{L^2}\leq \frac{C}{N}.$$
This implies the global existence for some large initial data. Combining Remark \ref{r1}, we obtain the global large solutions for \eqref{gs2222} with $a=0$ in critical Besov spaces, which implies some additions and improvements to \cite{ER,deng2021global}.
\end{remark}

\par
Next, by Theorem \ref{Th1} ($a>0$) and Theorem \ref{Th2} ($a=0$), we obtain the global existence for the system \eqref{gs2222}. Furthermore, one will ask that whether these two solutions are close to each other when $a\rightarrow 0$? For local time the answer is true. That is
$$\lim_{a\rightarrow 0}\|u^0-u^a\|_{L^{\infty}_T(B^{2}_{2,1})}
 +\|\tau^0-\tau^a\|_{L^{\infty}_T(B^{1}_{2,1})}=0,~~~\forall t\in [0,T].$$
However, for large time, the above equality is no longer valid. Indeed, $\|u\|_{L^2}~({B^2_{2,1}}\hookrightarrow {L^2})$ will have a jump when $T$ is sufficiently large. This implies the system \eqref{gs2222} is not steady for $a\rightarrow 0$ for large time, while for local time \eqref{gs2222} is steady. Therefore, whether the ratio is 1 ($a=0$) seems to have a fundamental effect on the nature of the global solutions for \eqref{gs2222}.
\par
In order to give the Theorem 1.3, we introduced a simple notation.
\par
Set $\mathbb{A}:=\{(u_0,\tau_0)\in (B^{2}_{2,1} (\mathbb{R}^2),B^{1}_{2,1} (\mathbb{R}^2))|~\eqref{gs2222}~ has~a~unique~solution~for~any~fixed~a\}.$ Here is our second result.
\begin{theorem}\label{Th3}
 Let $(u_0,\tau_0)\in (B^{2}_{2,1}(\mathbb{R}^2),B^{1}_{2,1} (\mathbb{R}^2))$
 as initial data of systems \eqref{gs2222}, $(u^a,\tau^a)$ is the corresponding solution for \eqref{gs2222} with every $a\geq 0$.\\
 (1) Then there exists a lifespan $T(u_0,\tau_0)$ independent of $a$ such that
\begin{align}\label{th3.1}
 \lim_{a\rightarrow 0}\|u^0-u^a\|_{L^{\infty}_T(B^{2}_{2,1})}
 +\|\tau^0-\tau^a\|_{L^{\infty}_T(B^{1}_{2,1})}=0.
\end{align}
(2) However, \eqref{th3.1} false for large time. Assume
 $$0< a\leq \varepsilon_0.$$
 Then there exists a time $T(a)$ and a sequence  $(u_0,\tau_0)\in\mathbb{A}$ as initial data of systems \eqref{gs2222} such that when $t\geq T(a)$,
 $$ \|(u^0-u^a)(t)\|_{L^2}\geq\frac{\varepsilon_0}{8},$$
 where $\varepsilon_0=\frac{1}{64(C^4+1)}  $, a fixed constant.
 \end{theorem}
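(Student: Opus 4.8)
The statement breaks into a short-time continuity assertion and a long-time separation, which I would attack by entirely different means.

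\textbf{Part (1): continuous dependence on $a$.} The plan is to read \eqref{th3.1} as continuity at $a=0$ of the solution map of \eqref{gs2222} on a common lifespan. First I would establish a \emph{uniform} local existence result: for $(u_0,\tau_0)\in B^{2}_{2,1}\times B^{1}_{2,1}$ and all $a\in[0,\varepsilon_0]$, \eqref{gs2222} has a solution on one interval $[0,T]$ with $T$ depending only on $\|u_0\|_{B^{2}_{2,1}}$ and $\|\tau_0\|_{B^{1}_{2,1}}$, satisfying a bound $\|u^a\|_{L^\infty_T(B^{2}_{2,1})}+\|\tau^a\|_{L^\infty_T(B^{1}_{2,1})}+\|\tau^a\|_{L^1_T(B^{3}_{2,1})}\le M$ independent of $a$; this is just the standard local well-posedness for \eqref{gs2222} (the local step of Theorem \ref{Th2}), and the only new term $a\,\mathbb D u^a$ costs at most $\varepsilon_0\|u^a\|_{B^{2}_{2,1}}$, absorbed by quantities already present, so the lifespan is not shortened. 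Next I would set $\delta u=u^0-u^a$, $\delta\tau=\tau^0-\tau^a$: these solve a linear system with zero data, transport by $u^0$, and diffusion plus damping on $\delta\tau$, whose right-hand sides are bilinear in $(\delta u,\delta\tau)$ against $(u^a,\tau^a)$ together with the single inhomogeneity $-a\,\mathbb D u^a$. Running the transport estimate for $\delta u$ in $B^{2}_{2,1}$, the parabolic (maximal-regularity) estimate for $\delta\tau$ in $B^{1}_{2,1}\cap L^1_T(B^{3}_{2,1})$, controlling every bilinear term by $M$ with the usual Besov product and commutator laws, and closing by Gronwall, I obtain $\|\delta u\|_{L^\infty_T(B^{2}_{2,1})}+\|\delta\tau\|_{L^\infty_T(B^{1}_{2,1})}\le C(M,T)\,a\to 0$, which is \eqref{th3.1}.

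The hard part of (1) is that $B^{2}_{2,1}=B^{1+d/p}_{p,1}$ is the \emph{critical} index, so $\delta u\cdot\nabla u^a$ is ``one derivative too expensive'' for a naive product law --- it formally asks for $\|u^a\|_{B^{3}_{2,1}}$, which is not available. This is exactly the obstruction to well-posedness of the Euler equation at this index recalled in Remark \ref{r1}, and I would circumvent it the same way: either via the loss-of-one-derivative transport estimate (Vishik; Bahouri--Chemin--Danchin), or more cheaply by first proving $\|\delta u\|_{L^\infty_T(B^{1}_{2,1})}+\|\delta\tau\|_{L^\infty_T(L^2)}\le Ca$, where the bilinear terms cost nothing, and then upgrading to the endpoint regularity.

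\textbf{Part (2): the large-time energy gap.} The plan is to pit conservation of kinetic energy at $a=0$ against the dissipation rate $[a(1+t)]^{-1/2}$ of Theorem \ref{Th1}. I would take $\tau_0\equiv 0$ and $u_0$ a Schwartz, divergence-free, spectrally low-frequency, spatially spread-out field, scaled so that $\|u_0\|_{L^2}=\varepsilon_0/4$ while $\|u_0\|_{H^1}$ is small enough for Theorem \ref{Th1} and $\|w_0\|_{B^0_{\infty,1}}\le\varepsilon_0$; these are compatible because for a low-frequency field $\|u_0\|_{H^1}\approx\|u_0\|_{L^2}$ and, by spreading $u_0$ out spatially, $\|w_0\|_{B^0_{\infty,1}}/\|u_0\|_{L^2}$ can be made as small as we wish. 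For such data $(u_0,0)\in\mathbb A$: when $a>0$ it is covered by Theorem \ref{Th1} (here $\|\tau_0\|_{H^1}=0\le a\epsilon_0$ trivially), and when $a=0$ the $\tau$-equation, with $\tau_0=0$, is linear and homogeneous in $\tau^0$, so an $L^2$ energy estimate and Gronwall force $\tau^0\equiv 0$; hence $u^0$ solves the 2D Euler equation and is global (consistent with Theorem \ref{Th2}). Therefore $\|u^0(t)\|_{L^2}=\|u_0\|_{L^2}=\varepsilon_0/4$ for all $t$, whereas Theorem \ref{Th1} (using $(u_0,\tau_0)\in L^1$) gives $\|u^a(t)\|_{L^2}\le\|u^a(t)\|_{H^1}\le C\|u_0\|_{H^1}[a(1+t)]^{-1/2}$. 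Setting $T(a):=64\,C^2\|u_0\|_{H^1}^2\,a^{-1}\varepsilon_0^{-2}$, which tends to $\infty$ as $a\to 0$, we get $\|u^a(t)\|_{L^2}\le\varepsilon_0/8$ for $t\ge T(a)$, so by the triangle inequality $\|(u^0-u^a)(t)\|_{L^2}\ge\|u_0\|_{L^2}-\|u^a(t)\|_{L^2}\ge\varepsilon_0/4-\varepsilon_0/8=\varepsilon_0/8$ for every $t\ge T(a)$, as claimed; the same $u_0$ serving all $a\in(0,\varepsilon_0]$ with $T(a)\to\infty$ is the asserted sequence of data and times. If one insists on $\tau_0\neq 0$, one instead writes $\|u^0(t)\|_{L^2}^2=\|u_0\|_{L^2}^2-2\int_0^t\!\!\int_{\mathbb R^2}\tau^0:\mathbb D u^0\,dx\,ds$ and bounds the flux by the exponential decay $\|\tau^0\|_{L^2}\le Ce^{-t/72}\|\tau_0\|_{L^2}$ of Theorem \ref{Th2}; the delicate point there is to keep $\int_0^\infty\|\tau^0\|_{L^2}\|\mathbb D u^0\|_{L^2}\,dt$ small, which needs control of $\|\nabla u^0\|_{L^2}$ over $[0,\infty)$ --- not directly available from Theorem \ref{Th2}, whose high norms grow double-exponentially --- and this is precisely why the choice $\tau_0\equiv 0$ is the clean one.
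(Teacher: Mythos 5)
Your two parts deserve separate verdicts.

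\textbf{Part (2).} Your argument is correct and takes a genuinely different (and cleaner) route than the paper. The paper chooses $a$-dependent data $(u_0,\tau_0)(a)=\frac{\epsilon_0 a}{2}(\phi(ax),a^2\phi(ax))$ with \emph{nonzero} stress, and must then run the energy-flux estimate \eqref{4.0}, using the exponential decay \eqref{x5} of $\tau^0$ to show $\|u^0(t)\|_{L^2}$ stays bounded below; the lower bound on $\|u^0-u^a\|_{L^2}$ then follows from the triangle inequality and the $[a(1+t)]^{-1/2}$ decay of Theorem \ref{Th1}, with $T(a)=a^{-2}$. You instead take $\tau_0\equiv 0$ and a single low-frequency, spread-out $u_0$ independent of $a$, observe that the $\tau$-equation is linear and homogeneous in $\tau$ so $\tau^0\equiv 0$ and $u^0$ solves 2D Euler with exactly conserved $L^2$ norm, and conclude by the same triangle inequality. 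Your choice sidesteps the flux estimate entirely, satisfies the hypotheses of Theorems \ref{Th1} and \ref{Th2} (hence $(u_0,0)\in\mathbb{A}$), and even produces a single datum working for all $a\in(0,\varepsilon_0]$, which is at least as strong as the ``sequence'' asserted in the statement. This part is sound.

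\textbf{Part (1).} Here there is a gap, and it sits exactly where the paper's proof does its real work. You correctly diagnose that a direct Gronwall closure for $(\delta u,\delta\tau)$ in $B^{2}_{2,1}\times B^{1}_{2,1}$ fails because $\delta u\cdot\nabla u^a$ demands $\|u^a\|_{B^{3}_{2,1}}$, and you correctly note that the difference estimate does close one derivative lower, giving $\|\delta u\|_{L^\infty_T(B^{1}_{2,1})}\le Ca$. But ``then upgrading to the endpoint regularity'' is not a step one can wave at: interpolating smallness in $B^{1}_{2,1}$ against mere \emph{boundedness} in $B^{2}_{2,1}$ yields smallness only in $B^{s}_{2,1}$ for $s<2$, never at $s=2$; and the Vishik/losing-derivative estimates likewise stop strictly below the critical index. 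The missing ingredient is the Bona--Smith-type triangulation the paper uses in \eqref{3.18}: mollify the data to $S_ju_0\in B^{3}_{2,1}$, run the lower-regularity difference estimate between $(u^a_j,\tau^a_j)$ and $(u^0_j,\tau^0_j)$ at the cost of a factor $2^j$ (estimate \eqref{3.20}), and control $\|u^a-u^a_j\|_{B^{2}_{2,1}}$ \emph{uniformly in $a$} by continuous dependence on the data and the uniform lifespan of Theorem \ref{Unstability1} and Lemma \ref{gj} (estimate \eqref{3.21}); the limit \eqref{th3.1} then follows by choosing $j$ first and $a$ second. Without this (or an equivalent high/low frequency splitting with equicontinuity of the high-frequency tails uniform in $a$), your Part (1) does not reach the norm $L^\infty_T(B^{2}_{2,1})$ claimed in \eqref{th3.1}. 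The uniform-in-$a$ local existence and the structure of the difference system in your sketch are otherwise fine.
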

 \begin{remark}
When $a=0$, by Theorem \ref{Th2}, we deduce that $\|u\|_{L^2}$ is bounded, while $\|\tau\|_{L^2}$ is exponential decay. However, when $a>0$, by Theorem \ref{Th1}, we obtain the polynomial decay of $\|u\|_{L^2}$ and $\|\tau\|_{L^2}$. This interesting fact implies that the ratio has two sides to the effect. It passes the decay of $\tau$ to $u$, while $\tau$ decays from exponential attenuation to polynomial attenuation. This shows that the energy of the velocity field and the stress tensor are converted to each other, but the process of transformation is transient for large time (see Theorem \ref{Th3}).
 \end{remark}

In the case $\alpha=0,$ we were surprised to find that the $\tau$ have a exponential decay by {iteration and reasonable assumption of bootstrap,} using the integral equation of the heat
 equation and bootstrap assumption, we can estimate the $L^\infty$ norm of $\tau$. Combining the prior estimate of transport equation
 in Besov space and subtle analytical skills, we complete the proof of the Theorem \ref{Th2}. Moreover, on the basis of Theorem \ref{Th2}, we
 give the proof of the Theorem \ref{Th3} by boothstrap argument.
\par\noindent
{\bf Notation}
Throughout the paper, we denote the norms of usual Lebesgue space $L^p(\R^3)$ by $
\|u\|^p_{L^p}=\int_{\Omega}|u|^p dx,~\hbox{for}~1\leq p<\infty$.
$C_i$ and $C$ denote different
positive constants in different places.
\par
The bootstrap argument will be employed in our proof. A rigorous statement of the abstract bootstrap principle can be found in T. Tao's book (see \cite{TT2}).
\par
The paper is organized as follows. In section 2, we will give the tools(Littlewood-Paley decomposition and
paradifferential calculus) and some useful prior estimate in Besov spaces. In section 3,  we give the proof of The theorem \ref{Th2}. In last section, we complete the
proof of unstability results.

\section{Preliminaries}
\par
In this section, we will recall some properties about the Littlewood-Paley decomposition and Besov spaces. {For more details, we refer the readers to the \cite{CH}.}
\begin{proposition}\label{p2}\cite{CH}
Let $\mathcal{C}$ be the annulus $\{\xi\in\mathbb{R}^d:\frac 3 4\leq|\xi|\leq\frac 8 3\}$. There exist radial functions $\chi$ and $\varphi$, valued in the interval $[0,1]$, belonging respectively to $\mathcal{D}(B(0,\frac 4 3))$ and $\mathcal{D}(\mathcal{C})$, and such that
$$ \forall\xi\in\mathbb{R}^d,\ \chi(\xi)+\sum_{j\geq 0}\varphi(2^{-j}\xi)=1, $$
$$ \forall\xi\in\mathbb{R}^d\backslash\{0\},\ \sum_{j\in\mathbb{Z}}\varphi(2^{-j}\xi)=1, $$
$$ |j-j'|\geq 2\Rightarrow\mathrm{Supp}\ \varphi(2^{-j}\cdot)\cap \mathrm{Supp}\ \varphi(2^{-j'}\cdot)=\emptyset, $$
$$ j\geq 1\Rightarrow\mathrm{Supp}\ \chi(\cdot)\cap \mathrm{Supp}\ \varphi(2^{-j}\cdot)=\emptyset. $$
The set $\widetilde{\mathcal{C}}=B(0,\frac 2 3)+\mathcal{C}$ is an annulus, and we have
$$ |j-j'|\geq 5\Rightarrow 2^{j}\mathcal{C}\cap 2^{j'}\widetilde{\mathcal{C}}=\emptyset. $$
Further, {one has}
$$ \forall\xi\in\mathbb{R}^d,\ \frac 1 2\leq\chi^2(\xi)+\sum_{j\geq 0}\varphi^2(2^{-j}\xi)\leq 1, $$
$$ \forall\xi\in\mathbb{R}^d\backslash\{0\},\ \frac 1 2\leq\sum_{j\in\mathbb{Z}}\varphi^2(2^{-j}\xi)\leq 1. $$
\end{proposition}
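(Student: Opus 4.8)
\emph{Proof plan.} The statement is the classical construction of a dyadic (Littlewood--Paley) partition of unity, so the proof is an explicit choice of $\chi,\varphi$ followed by elementary bookkeeping of supports. First I would fix a radial, radially non-increasing function $\chi\in\mathcal{D}(B(0,\tfrac43))$ with values in $[0,1]$ and $\chi\equiv1$ on the closed ball $\overline{B}(0,\tfrac34)$; such a $\chi$ is obtained by mollifying the indicator of an intermediate ball. Then I set
$$\varphi(\xi):=\chi(\xi/2)-\chi(\xi).$$
This $\varphi$ is radial; since $\chi$ is $[0,1]$-valued and radially non-increasing, $0\le\varphi\le1$; and $\varphi\equiv0$ on $\{|\xi|\le\tfrac34\}$ (both terms equal $1$) and on $\{|\xi|\ge\tfrac83\}$ (both terms equal $0$), so $\mathrm{Supp}\,\varphi\subset\mathcal{C}$ and $\varphi\in\mathcal{D}(\mathcal{C})$.

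The two partition identities follow by telescoping. One has $\sum_{j=0}^{N}\varphi(2^{-j}\xi)=\chi(2^{-N-1}\xi)-\chi(\xi)$, and $\chi(2^{-N-1}\xi)\to1$ as $N\to\infty$ because $2^{-N-1}\xi$ eventually lies in $\overline{B}(0,\tfrac34)$; hence $\chi(\xi)+\sum_{j\ge0}\varphi(2^{-j}\xi)=1$ on $\mathbb{R}^d$. For $\xi\neq0$, $\sum_{j=-M}^{N}\varphi(2^{-j}\xi)=\chi(2^{-N-1}\xi)-\chi(2^{M}\xi)$, and as $M,N\to\infty$ the first term tends to $1$ while the second tends to $0$ (since $2^{M}\xi$ eventually leaves $B(0,\tfrac43)$), so $\sum_{j\in\mathbb{Z}}\varphi(2^{-j}\xi)=1$.

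The disjointness statements are direct computations with the radii of the annuli. If $\xi\in2^{j}\mathcal{C}\cap2^{j'}\mathcal{C}$ then $\tfrac34\le2^{-j}|\xi|,\,2^{-j'}|\xi|\le\tfrac83$, hence $2^{|j-j'|}\le\tfrac{8/3}{3/4}=\tfrac{32}{9}<4$ and $|j-j'|\le1$; since $\mathrm{Supp}\,\varphi(2^{-j}\cdot)\subset2^{j}\mathcal{C}$, this gives the first disjointness claim and, by strictness, $2^{j}\mathcal{C}\cap2^{j'}\mathcal{C}=\emptyset$ for $|j-j'|\ge2$. Since $\mathrm{Supp}\,\chi\subset\overline{B}(0,\tfrac43)$ while $\mathrm{Supp}\,\varphi(2^{-j}\cdot)\subset\{|\xi|\ge3\cdot2^{j-2}\}$ and $3\cdot2^{-1}=\tfrac32>\tfrac43$, the two supports are disjoint whenever $j\ge1$. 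For $\widetilde{\mathcal{C}}$, computing the distance to $\mathcal{C}$ shows $B(0,\tfrac23)+\mathcal{C}=\{\tfrac1{12}<|\xi|<\tfrac{10}{3}\}$, which is an annulus; and if $2^{j}\mathcal{C}\cap2^{j'}\widetilde{\mathcal{C}}\neq\emptyset$, comparing endpoints gives $2^{j-j'}\le\tfrac{10/3}{3/4}=\tfrac{40}{9}$ and $2^{j'-j}\le\tfrac{8/3}{1/12}=32$, so $j-j'\le2$ and $j'-j\le5$, with the borderline $j'-j=5$ ruled out because $\widetilde{\mathcal{C}}$ is open at its inner radius $\tfrac1{12}$ whereas $\mathcal{C}$ is closed at its outer radius $\tfrac83$; hence $|j-j'|\ge5$ forces disjointness.

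Finally, for the square-sum bounds, fix $\xi$ and note that by the above at most two of the numbers $\chi(\xi),\{\varphi(2^{-j}\xi)\}_{j\ge0}$ are nonzero: if $\chi(\xi)\neq0$ then $|\xi|<\tfrac43$, so $\varphi(2^{-j}\xi)=0$ for every $j\ge1$ and only $\chi(\xi),\varphi(\xi)$ survive; otherwise only two consecutive $\varphi$'s survive. For $a,b\in[0,1]$ with $a+b=1$ one has $a^{2}+b^{2}\le a+b=1$ and $a^{2}+b^{2}\ge\tfrac12(a+b)^{2}=\tfrac12$ (a single nonzero term gives the value $1$); applying this to the at most two nonzero terms in $\chi(\xi)+\sum_{j\ge0}\varphi(2^{-j}\xi)=1$ yields $\tfrac12\le\chi^{2}(\xi)+\sum_{j\ge0}\varphi^{2}(2^{-j}\xi)\le1$, and the same argument applied to $\sum_{j\in\mathbb{Z}}\varphi(2^{-j}\xi)=1$ gives $\tfrac12\le\sum_{j\in\mathbb{Z}}\varphi^{2}(2^{-j}\xi)\le1$ for $\xi\neq0$. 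There is no genuine obstacle here; the only points needing mild care are choosing $\chi$ radially non-increasing so that $\varphi=\chi(\cdot/2)-\chi(\cdot)\ge0$, and tracking the dyadic radii so that the thresholds $2$ and $5$ come out exactly as stated.
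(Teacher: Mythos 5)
Your construction is correct and is exactly the classical one: the paper itself gives no proof of this proposition, citing it from \cite{CH}, and your choice $\varphi(\xi)=\chi(\xi/2)-\chi(\xi)$ with $\chi$ radial, radially non-increasing and equal to $1$ near $\overline{B}(0,\tfrac34)$, followed by telescoping, the dyadic support bookkeeping (including the correct handling of the borderline case $j'-j=5$ via the strict inner radius $\tfrac1{12}$ of $\widetilde{\mathcal{C}}$), and the ``at most two nonzero overlapping terms'' argument for the bounds $\tfrac12\le\chi^2+\sum\varphi^2\le1$, is the standard proof found in that reference. No gaps to report.
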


\begin{definition}\cite{CH}
Let $u$ be a tempered distribution in $\mathcal{S}'(\mathbb{R}^d)$ and $\mathcal{F}$ be the Fourier transform and $\mathcal{F}^{-1}$ be its inverse. For all $j\in\mathbb{Z}$, define
$$
\Delta_j u=0\,\ \text{if}\,\ j\leq -2,\quad
\Delta_{-1} u=\mathcal{F}^{-1}(\chi\mathcal{F}u),\quad
\Delta_j u=\mathcal{F}^{-1}(\varphi(2^{-j}\cdot)\mathcal{F}u)\,\ \text{if}\,\ j\geq 0,\quad
S_j u=\sum_{j'<j}\Delta_{j'}u.
$$
Then the { nonhomogeneous} Littlewood-Paley decomposition is given as follows:
\begin{align*}
u=\sum_{j\in\mathbb{Z}}\Delta_j u \quad \text{in}\ \mathcal{S}'(\mathbb{R}^d).	
\end{align*}
Let $s\in\mathbb{R},\ 1\leq p,r\leq\infty.$ The nonhomogeneous Besov space $B^s_{p,r}(\mathbb{R}^d)$ is defined by
$$ B^s_{p,r}=B^s_{p,r}(\mathbb{R}^d)=\{u\in S'(\mathbb{R}^d):\|u\|_{B^s_{p,r}(\mathbb{R}^d)}=\Big\|(2^{js}\|\Delta_j u\|_{L^p(\mathbb{S}^d)})_j \Big\|_{l^r(\mathbb{Z})}<\infty\}. $$
\end{definition}
\begin{definition}\cite{CH}
The homogeneous dyadic blocks $\dot{\Delta}_j$ are defined on the tempered distributions by
\begin{equation*}
  \dot{\Delta}_ju=\varphi(2^{-j}D)u:=\mathcal{F}^{-1}(\varphi(2^{-j\cdot})\hat{u}).
\end{equation*}
\begin{equation*}
  \dot{S}_ju=\sum_{j'\leq j-1}\dot{\Delta}_{j'}u.
\end{equation*}
 \begin{definition}\label{de1}
 	  We denote by $S'_h$ the space of tempered distributions $u$ such that
 	 \begin{equation*}
 	 \lim_{j\rightarrow-\infty}\dot{S}_ju=0 ~~\text{in}~~S'.
 	 \end{equation*}

 \end{definition}
 The homogeneous Littlewood-Paley decomposition is defined as
 \begin{equation*}
 u=\sum_{j\in\mathbb{Z}}\dot{\Delta}_ju,~~~~\text{for}~~u\in S'_h.
 \end{equation*}
\end{definition}
\begin{definition}
  For $s\in\R, 1\leq p\leq\infty,$ the homogeneous Besov space $\dot{B}^s_{p,r}$ is defined as
\begin{equation*}
  \dot{B}^s_{p,r}:=\{u\in S'_h, \|u\|_{\dot{B}^s_{p,r}}<\infty\},
\end{equation*}
where the homogeneous Besov norm is given by
\begin{equation*}
  \|u\|_{\dot{B}^s_{p,r}}:=\|\{2^{js}\|\dot{\Delta}_ju\|_{L^p}\}_j\|_{l^r}.
\end{equation*}
\end{definition}

In this paper, we use the "time-space" Besov spaces or Chemin-Lerner space first introduced by Chemin and Lerner in \cite{CL}.
\par\noindent
{\bf Definition 2.3.}\,\,
{\it Let $s\in\R$ and $0<T\leq+\infty.$ We define
\begin{equation*}
  \|u\|_{\tilde{L}^q_T({B}^s_{p,1})}:=\sum_{j\in\mathbb{Z}}2^{js}\bigg(\int^T_0\|{\Delta}_ju(t)\|^q_{L^p}dt\bigg)^\frac{1}{q},
\end{equation*}
for $p,q\in[1,\infty)$ and with the standard modification for $p,q=\infty.$
}
\par\noindent
By the Minkowski's inequality, it is easy to verify that
\begin{equation*}
  \|u\|_{\tilde{L}^\lambda_T({B}^s_{p,r}))}\leq \|u\|_{L^\lambda_T({B}^s_{p,r}))}~~~\text{if}~~\lambda\leq r,
\end{equation*}
and
\begin{equation*}
  \|u\|_{\tilde{L}^\lambda_T({B}^s_{p,r}))}\geq \|u\|_{L^\lambda_T({B}^s_{p,r}))}~~~\text{if}~~\lambda\geq r.
\end{equation*}
\par\noindent
The following Bernstein's lemma will be repeatedly used in this paper.
 \begin{lemma}\label{le1}
 	 Let $\mathcal{B}$ is a ball and $\mathcal{C}$ is a ring of $\R^d.$ There exists constant $C$ such that for any positive $\lambda,$
 	 any non-negative integer $k,$ any smooth homogeneous function $\sigma$ of degree $m,$
 	 any couple $(p,q)\in[1,\infty]^2$ with $q\geq p\geq1,$ and any function $u\in L^p,$ there holds
 	 \begin{equation*}
 	 \mathrm{supp}\hat{u}\subset\lambda\mathcal{B}\Rightarrow\sup_{|\alpha=k|}\|\partial^\alpha u\|_{L^q}\leq C^{k+1}\lambda^{k+d(\frac{1}{p}-\frac{1}{q})}\|u\|_{L^p},
 	 \end{equation*}
 	 \begin{equation*}
 	 \mathrm{supp}\hat{u}\subset\lambda\mathcal{C}\Rightarrow C^{-k-1}\lambda^{k}\|u\|_{L^p}\leq\sup_{|\alpha=k|}\|\partial^\alpha u\|_{L^p}\leq C^{k+1}\lambda^{k}\|u\|_{L^p},
 	 \end{equation*}
 	 \begin{equation*}
 	 \mathrm{supp}\hat{u}\subset\lambda\mathcal{C}\Rightarrow \sup_{|\alpha=k|}\|\sigma(D)u\|_{L^p}\leq C_{\sigma,m}\lambda^{m+d(\frac{1}{p}-\frac{1}{q})}\|u\|_{L^p}.
 	 \end{equation*}
\end{lemma}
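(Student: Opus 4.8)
These are the classical Bernstein inequalities, so the plan is the standard one. First I would eliminate $\lambda$ by a dilation: putting $v(x)=u(x/\lambda)$ gives $\widehat v(\xi)=\lambda^{d}\widehat u(\lambda\xi)$, hence $\mathrm{supp}\,\widehat u\subset\lambda\mathcal B$ (resp. $\lambda\mathcal C$) turns into $\mathrm{supp}\,\widehat v\subset\mathcal B$ (resp. $\mathcal C$), while $\|v\|_{L^p}=\lambda^{d/p}\|u\|_{L^p}$ and $\|\partial^\alpha v\|_{L^q}=\lambda^{d/q-k}\|\partial^\alpha u\|_{L^q}$ when $|\alpha|=k$. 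Substituting these identities into the $\lambda=1$ versions reproduces exactly the powers of $\lambda$ in the statement, so it suffices to treat $\lambda=1$.

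\textbf{The first inequality ($\lambda=1$).} Fix $\phi\in\mathcal D(\R^d)$ with $\phi\equiv1$ on a neighbourhood of $\mathcal B$ and $\mathrm{supp}\,\phi\subset B(0,R)$. Since $\widehat{\partial^\alpha u}=(i\xi)^\alpha\phi(\xi)\widehat u(\xi)$, we get $\partial^\alpha u=g_\alpha\ast u$ with $g_\alpha:=\mathcal F^{-1}\big((i\xi)^\alpha\phi\big)$, and Young's inequality with $1+\tfrac1q=\tfrac1p+\tfrac1r$ (admissible exactly because $q\ge p$) gives $\|\partial^\alpha u\|_{L^q}\le\|g_\alpha\|_{L^r}\|u\|_{L^p}\le\max\{\|g_\alpha\|_{L^1},\|g_\alpha\|_{L^\infty}\}\|u\|_{L^p}$. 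Here $\|g_\alpha\|_{L^\infty}\le\int_{\R^d}|\xi|^{k}|\phi(\xi)|\,d\xi\le R^{k}\|\phi\|_{L^1}$ is immediate, and for $\|g_\alpha\|_{L^1}$ I would use the weight $(1+|x|^2)^{d}$: from $(1+|x|^2)^{d}e^{ix\cdot\xi}=(1-\Delta_\xi)^{d}e^{ix\cdot\xi}$ and integration by parts, $\|(1+|x|^2)^{d}g_\alpha\|_{L^\infty}\le\int_{\R^d}\big|(1-\Delta_\xi)^{d}\big((i\xi)^\alpha\phi\big)\big|\,d\xi$, whose Leibniz expansion has a number of terms depending only on $d$, each dominated by a constant $C(d,\phi)$ times a polynomial in $k$ times $R^{k}$; dividing by $(1+|x|^2)^d$ and integrating (convergent since $2d>d$) yields $\|g_\alpha\|_{L^1}\le C(d,\phi)k^{2d}R^{k}$. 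As $k^{2d}$ is dominated by $C^{k}$, both norms are $\le C^{k+1}$ with $C$ independent of $\alpha$.

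\textbf{The second inequality.} The upper bound is the case $q=p$ of the first inequality applied with $\mathcal C$ in place of $\mathcal B$ (the argument above only used that $\mathrm{supp}\,\widehat u$ lies in a fixed compact set). For the lower bound, take $\phi\in\mathcal D(\R^d)$ with $\phi\equiv1$ on $\mathcal C$ and $\mathrm{supp}\,\phi\subset\{\rho\le|\xi|\le R\}$, $\rho>0$, and invert the derivatives algebraically on $\mathrm{supp}\,\phi$ via $1=|\xi|^{-2k}\big(\xi_1^2+\dots+\xi_d^2\big)^{k}=|\xi|^{-2k}\sum_{|\alpha|=k}\binom{k}{\alpha}\xi^{2\alpha}$ and $\xi^{2\alpha}=(i\xi)^\alpha(-i)^{k}\xi^\alpha$: this writes $\widehat u=\phi\widehat u=\sum_{|\alpha|=k}(i\xi)^\alpha h_\alpha(\xi)\widehat u(\xi)$ with $h_\alpha:=(-i)^{k}\binom{k}{\alpha}\xi^\alpha|\xi|^{-2k}\phi(\xi)\in\mathcal D(\R^d)$, hence $u=\sum_{|\alpha|=k}\mathcal F^{-1}(h_\alpha)\ast\partial^\alpha u$ and $\|u\|_{L^p}\le\big(\#\{|\alpha|=k\}\max_{|\alpha|=k}\|\mathcal F^{-1}h_\alpha\|_{L^1}\big)\sup_{|\alpha|=k}\|\partial^\alpha u\|_{L^p}$. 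Running the weighted estimate of the previous step — now also differentiating $|\xi|^{-2k}$, each such derivative costing a factor bounded by $Ck\rho^{-1}$ — together with $\binom{k}{\alpha}\le d^{k}$, $|\xi^\alpha|\le R^{k}$ on $\mathrm{supp}\,\phi$, and $\#\{|\alpha|=k\}\le C_d(1+k)^{d-1}$, keeps the prefactor $\le C^{k+1}$; undoing the dilation gives $C^{-k-1}\lambda^{k}\|u\|_{L^p}\le\sup_{|\alpha|=k}\|\partial^\alpha u\|_{L^p}$.

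\textbf{The third inequality, and the main obstacle.} For $\sigma(D)$ one writes $\sigma(D)u=\mathcal F^{-1}\!\big(\sigma(\xi)\phi(\xi/\lambda)\big)\ast u$ with $\phi\in\mathcal D$ equal to $1$ on $\mathcal C$ and supported in a fixed annulus away from the origin; homogeneity $\sigma(\lambda\xi)=\lambda^{m}\sigma(\xi)$ gives $\mathcal F^{-1}\big(\sigma\,\phi(\cdot/\lambda)\big)(x)=\lambda^{d+m}\mathcal F^{-1}(\sigma\phi)(\lambda x)$, and since $\sigma\phi\in\mathcal D(\R^d\setminus\{0\})$ its inverse Fourier transform is Schwartz, so Young's inequality produces the factor $\lambda^{m}$ from homogeneity and $\lambda^{d(1/p-1/q)}$ from the rescaled kernel. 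The whole argument is conceptually routine; the one place that needs genuine (if elementary) care is making the constants come out of the exponential type $C^{k+1}$ \emph{uniformly in $\alpha$ with $|\alpha|=k$}, which is exactly the role of the weighted $(1+|x|^2)^{d}$ trick and of tracking that differentiating $\xi^\alpha$ and $|\xi|^{-2k}$ only costs factors polynomial in $k$ (absorbed into $C^{k}$).
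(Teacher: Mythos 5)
Your proof is correct and is the standard Bernstein-lemma argument (dilation to $\lambda=1$, convolution with a cutoff whose Fourier transform is controlled via the weight $(1+|x|^2)^d$, Young's inequality, and the multinomial identity $|\xi|^{2k}=\sum_{|\alpha|=k}\frac{k!}{\alpha!}\xi^{2\alpha}$ for the lower bound); the paper itself gives no proof, stating the lemma as a classical result from its reference [BCD], and your argument is essentially the textbook proof it relies on. One small point worth noting: as printed in the paper the third inequality carries a spurious $\sup_{|\alpha|=k}$ and has $L^p$ on the left-hand side while keeping the factor $\lambda^{d(\frac1p-\frac1q)}$; your Young-inequality argument proves the correct (intended) form $\|\sigma(D)u\|_{L^q}\leq C_{\sigma,m}\lambda^{m+d(\frac1p-\frac1q)}\|u\|_{L^p}$, which is what is actually used.
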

Next, we will give the paraproducts and product estimates in Besov spaces. Recalling the paraproduct decomposition
\begin{equation*}
  uv={T}_uv+{T}_vu+{R}(u,v),
\end{equation*}
where
\begin{equation*}
  {T}_uv:=\sum_q{S}_{q-1}u{\Delta}_v,~~~{R}(u,v):=\sum_q{\Delta}_qu{\tilde{\Delta}}_qv,~~\text{and}~~{\tilde{\Delta}}_q
  ={\Delta}_{q-1}+{\Delta}_{q}+{\Delta}_{q+1}.
\end{equation*}
\par\noindent
The paraproduct ${T}$ and the remainder ${R}$ operators satisfy the following continuous properties.
\begin{proposition}\label{p2}
  For all $s\in\R, \sigma>0,$ and $1\leq p, p_1,p_2,r,r_1,r_2\leq\infty,$ the paraproduct ${T}$ is a bilinear,
continuous operator from $L^\infty\times{B}^s_{p,r}$ to ${B}^s_{p,r}$ and from
${B}^{-\sigma}_{p_1,r_1}\times\dot{B}^{s}_{p_2,r_2}$ to ${B}^{s-\sigma}_{p,r}$ with $\frac{1}{r}=\min\{1,\frac{1}{r_1}+\frac{1}{r_2}\},
\frac{1}{p}=\frac{1}{p_1}+\frac{1}{p_2}.$
The remainder ${R}$ is bilinear continuous from ${B}^{s_1}_{p_1,r_1}\times {B}^{s_2}_{p_2,r_2}$
 to ${B}^{s_1+s_2}_{p,r}$ with $s_1+s_2>0, \frac{1}{p}=\frac{1}{p_1}+\frac{1}{p_2}\leq1,$
 and $\frac{1}{r}=\frac{1}{r_1}+\frac{1}{r_2}\leq 1.$ In particular, if $r=\infty,$
the continuous property for the remainder ${R}$ also holds for the case $s_1+s_2=0, r=\infty, \frac{1}{r_1}+\frac{1}{r_2}=1.$
\end{proposition}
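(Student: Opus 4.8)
\emph{Proof idea.} The statement is the classical continuity of the paraproduct and remainder operators, and the plan is the textbook one: combine the Littlewood--Paley definition of $B^s_{p,r}$ with a careful tracking of the frequency supports produced by the blocks $S_{q-1}$, $\Delta_q$, $\tilde\Delta_q$, and then reduce everything to Bernstein's lemma (Lemma \ref{le1}), Hölder's inequality, and Young's convolution inequality on $\mathbb{Z}$. I would treat $T$ and $R$ separately, since the geometry of the resulting double sums is different.

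For the paraproduct $T_uv=\sum_q S_{q-1}u\,\Delta_q v$, I would first note that $S_{q-1}u$ is spectrally supported in a ball of radius $\sim 2^{q-2}$ and $\Delta_q v$ in the annulus $2^q\mathcal C$, so each term is supported in a dilated annulus $2^q\widetilde{\mathcal C}$; by the support properties in the dyadic partition of unity this forces $\Delta_j(S_{q-1}u\,\Delta_q v)=0$ unless $|j-q|\le N_0$ for a fixed integer $N_0$, hence $\Delta_j(T_uv)=\sum_{|j-q|\le N_0}\Delta_j(S_{q-1}u\,\Delta_q v)$. In the $L^\infty\times B^s_{p,r}$ case I would bound $\|S_{q-1}u\,\Delta_q v\|_{L^p}\le\|u\|_{L^\infty}\|\Delta_q v\|_{L^p}$ by Hölder, so that $2^{js}\|\Delta_j(T_uv)\|_{L^p}\lesssim\|u\|_{L^\infty}\sum_{|j-q|\le N_0}2^{(j-q)s}\,2^{qs}\|\Delta_q v\|_{L^p}$, and then take the $\ell^r$ norm in $j$ and apply Young with the finitely supported (hence $\ell^1$) sequence $(2^{ks}\mathbbm{1}_{\{|k|\le N_0\}})_k$. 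For the map $B^{-\sigma}_{p_1,r_1}\times\dot B^{s}_{p_2,r_2}\to B^{s-\sigma}_{p,r}$ the only new ingredient is the estimate $\|S_{q-1}u\|_{L^{p_1}}\lesssim 2^{q\sigma}d_q\|u\|_{B^{-\sigma}_{p_1,r_1}}$ with $(d_q)\in\ell^{r_1}$, which follows by summing $\sum_{q'\le q-2}\|\Delta_{q'}u\|_{L^{p_1}}$ and using that the geometric weight $2^{(q'-q)\sigma}$ is summable precisely because $\sigma>0$; then Hölder with $\frac1p=\frac1{p_1}+\frac1{p_2}$ together with the same convolution step (and sequence Hölder for $\frac1r=\min\{1,\frac1{r_1}+\frac1{r_2}\}$) finishes it, handling the low-frequency block in the usual way.

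For the remainder $R(u,v)=\sum_q\Delta_q u\,\tilde\Delta_q v$ the key structural observation is that $\Delta_q u\,\tilde\Delta_q v$ is spectrally supported in a \emph{ball} of radius $\sim 2^{q+2}$ rather than an annulus, so $\Delta_j(\Delta_q u\,\tilde\Delta_q v)$ can be nonzero for every $q\ge j-N_1$ with $N_1$ fixed, giving $\Delta_j(R(u,v))=\sum_{q\ge j-N_1}\Delta_j(\Delta_q u\,\tilde\Delta_q v)$. Since $\frac1p=\frac1{p_1}+\frac1{p_2}\le1$, Hölder gives $\|\Delta_q u\,\tilde\Delta_q v\|_{L^p}\le\|\Delta_q u\|_{L^{p_1}}\|\tilde\Delta_q v\|_{L^{p_2}}$ with no Bernstein correction needed, and writing $a_q=2^{qs_1}\|\Delta_q u\|_{L^{p_1}}\in\ell^{r_1}$, $b_q=2^{qs_2}\|\tilde\Delta_q v\|_{L^{p_2}}\in\ell^{r_2}$ one obtains $2^{j(s_1+s_2)}\|\Delta_j(R(u,v))\|_{L^p}\lesssim\sum_{q\ge j-N_1}2^{(j-q)(s_1+s_2)}a_q b_q$. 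Here $s_1+s_2>0$ is exactly what makes $(2^{k(s_1+s_2)}\mathbbm{1}_{\{k\le N_1\}})_k$ belong to $\ell^1(\mathbb{Z})$, so Young in $j$ plus sequence Hölder ($\frac1r=\frac1{r_1}+\frac1{r_2}$) closes the estimate. In the endpoint case $s_1+s_2=0$, $r=\infty$, $\frac1{r_1}+\frac1{r_2}=1$, the weight is identically $1$, and one bounds instead $\sup_j\sum_{q\ge j-N_1}a_q b_q\le\sum_q a_q b_q\le\|(a_q)\|_{\ell^{r_1}}\|(b_q)\|_{\ell^{r_2}}$ by Hölder with conjugate exponents, which is precisely membership in $B^0_{p,\infty}$.

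The step I expect to require the most care is the spectral-support bookkeeping that distinguishes the two operators: the paraproduct terms land in annuli and produce an essentially diagonal sum over $|j-q|\lesssim1$, harmless for every $s$, whereas the remainder terms land in balls and produce a one-sided sum over $q\gtrsim j$ whose convergence is what dictates the hypothesis $s_1+s_2>0$ and, at the borderline $s_1+s_2=0$, forces the $r=\infty$, $\frac1{r_1}+\frac1{r_2}=1$ tail argument instead of a convolution estimate. Everything else—Bernstein, Hölder, and Young—is routine.
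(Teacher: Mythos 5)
Your proposal is correct and is precisely the standard argument (frequency-support bookkeeping, Bernstein/H\"older, and Young's convolution inequality on $\mathbb{Z}$, with the one-sided tail sum and the $\ell^{r_1}$--$\ell^{r_2}$ H\"older endpoint for the remainder) that underlies this proposition; the paper itself does not prove it but cites the reference \cite{CH}, whose proof follows exactly this route. No gaps.
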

Combining the above proposition with Lemma \ref{le1} yields the following product estimates:

\begin{coro}\label{co1}
	 Let $a$ and $b$ be in $L^\infty\cap {B}^s_{p,r}$ for some $s>0$ and $(p,r)\in[1,\infty]^2.$
	Then there exists a constant $C$ depending only on $d, p$ and such that
	\begin{equation*}
	\|ab\|_{{B}^s_{p,r}}\leq C(\|a\|_{L^\infty}\|b\|_{{B}^s_{p,r}}+\|b\|_{L^\infty}\|a\|_{{B}^s_{p,r}}).
	\end{equation*}
\end{coro}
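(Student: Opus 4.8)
The plan is to run Bony's paraproduct decomposition and control each of the three resulting pieces by Proposition~\ref{p2} and the Bernstein estimates of Lemma~\ref{le1}. Write
\begin{equation*}
ab = T_a b + T_b a + R(a,b).
\end{equation*}
For the two paraproducts I would simply quote the first continuity statement of Proposition~\ref{p2}, namely that $T$ is bounded from $L^\infty\times B^s_{p,r}$ into $B^s_{p,r}$; this gives
\begin{equation*}
\|T_a b\|_{B^s_{p,r}} \le C\|a\|_{L^\infty}\|b\|_{B^s_{p,r}},\qquad
\|T_b a\|_{B^s_{p,r}} \le C\|b\|_{L^\infty}\|a\|_{B^s_{p,r}} .
\end{equation*}
So the only term requiring genuine work is the remainder $R(a,b)$, and this is exactly where the hypothesis $s>0$ will be consumed.

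For $R(a,b)=\sum_q \Delta_q a\,\widetilde{\Delta}_q b$, the first point is that each summand has Fourier support in a ball of radius $\lesssim 2^q$ (here one must remember that the low block $\Delta_{-1}$ is supported in a ball, not an annulus), so there is a fixed integer $N_0$ with $\Delta_j R(a,b)=\sum_{q\ge j-N_0}\Delta_j\bigl(\Delta_q a\,\widetilde{\Delta}_q b\bigr)$. Using the uniform $L^p$- and $L^\infty$-boundedness of the Littlewood--Paley truncations (Young's inequality) together with H\"older's inequality, I would estimate
\begin{equation*}
2^{js}\|\Delta_j R(a,b)\|_{L^p}\le C\,\|a\|_{L^\infty}\sum_{q\ge j-N_0}2^{(j-q)s}\,\bigl(2^{qs}\|\widetilde{\Delta}_q b\|_{L^p}\bigr).
\end{equation*}
Because $s>0$, the sequence $(2^{-ks})_{k\ge-N_0}$ (extended by zero) lies in $\ell^1$, so Young's inequality for discrete convolutions bounds the $\ell^r_j$-norm of the right-hand side by $C\|a\|_{L^\infty}\|b\|_{B^s_{p,r}}$, whence $\|R(a,b)\|_{B^s_{p,r}}\le C\|a\|_{L^\infty}\|b\|_{B^s_{p,r}}$. (Equivalently, one may invoke the remainder part of Proposition~\ref{p2} with $(s_1,s_2)=(s,0)$, $(p_1,p_2)=(p,\infty)$ and $(r_1,r_2)=(r,\infty)$, using $L^\infty\hookrightarrow B^0_{\infty,\infty}$.)

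Summing the three bounds yields $\|ab\|_{B^s_{p,r}}\le C(\|a\|_{L^\infty}\|b\|_{B^s_{p,r}}+\|b\|_{L^\infty}\|a\|_{B^s_{p,r}})$, which is the claim. The main (indeed essentially the only) obstacle is the remainder: one has to exploit positivity of $s$ to sum the geometric tail $\sum_{q\ge j-N_0}2^{(j-q)s}$, handle the Fourier-support bookkeeping for the $j=-1$ block, and make a brief check at the endpoint $r=\infty$ — which is harmless since $B^0_{\infty,\infty}$ remains available.
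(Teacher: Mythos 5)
Your argument is correct and is precisely the route the paper intends: the paper derives this corollary by combining Proposition~\ref{p2} with the Bernstein estimates, i.e.\ Bony's decomposition with the $L^\infty\times B^s_{p,r}\to B^s_{p,r}$ bound for the two paraproducts and the remainder estimate with $s_1+s_2=s>0$ via $L^\infty\hookrightarrow B^0_{\infty,\infty}$. Your direct convolution treatment of $R(a,b)$ is a valid expansion of that step and introduces nothing essentially different.
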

{We note that the above Proposition and Corollary is valid in the homogeneous framework (i.e. with $\dot{\Delta}_j$ instead of $\Delta_j$ and with
homogeneous Besov norms instead of nonhomogeneous ones), the readers can refer the \cite{CH}.}
Finally, we intruduce some useful results about the following heat conductive equation and the transport equation
\begin{equation}\label{s1cuchong}
\left\{\begin{array}{l}
    u_t-\Delta u+\beta u=G,\ x\in\mathbb{R}^d,\ \beta\geq0,~t>0, \\
    u(0,x)=u_0(x),\ x\in\mathbb{R}^d,
\end{array}\right.
\end{equation}
\begin{equation}\label{s100}
\left\{\begin{array}{l}
    f_t+v\cdot\nabla f=g,\ x\in\mathbb{R}^d,\ t>0, \\
    f(0,x)=f_0(x),\ x\in\mathbb{R}^d,
\end{array}\right.
\end{equation}
which are crucial to the proof of our main theorem later.
\begin{lemma}\label{priori estimate111}\cite{MR4199961}
Let $1\leq p\leq q\leq\infty$ and $k\geq 0,$ it holds that
\begin{equation*}
  \|\nabla^ke^{t\Delta}f\|_{L^{q}}\leq Ct^{-\frac{k}{2}-\frac{1}{p}+\frac{1}{q}}\|f\|_{L^{p}}.
\end{equation*}
\end{lemma}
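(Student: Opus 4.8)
\textbf{Proof proposal for Lemma \ref{priori estimate111}.}

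The plan is to prove this classical $L^p$--$L^q$ estimate for the heat semigroup by reducing everything to the explicit Gaussian kernel and applying Young's convolution inequality. First I would recall that $e^{t\Delta}f = K_t * f$, where the heat kernel is $K_t(x) = (4\pi t)^{-d/2} e^{-|x|^2/(4t)}$, so that $\nabla^k e^{t\Delta} f = (\nabla^k K_t) * f$. By a change of variables $x = \sqrt{t}\,y$ one sees that $\nabla^k K_t(x) = t^{-d/2 - k/2}\,(\nabla^k K_1)(x/\sqrt{t})$, which encodes the scaling that will produce the exponent $-\tfrac{k}{2}-\tfrac1p+\tfrac1q$ once the Lebesgue norms are accounted for.

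The main step is then to estimate $\|\nabla^k K_t\|_{L^r}$ for the exponent $r$ determined by Young's inequality, namely $1 + \tfrac1q = \tfrac1r + \tfrac1p$, i.e. $\tfrac1r = 1 - \tfrac1p + \tfrac1q \in [0,1]$ (this is where the hypothesis $p \le q$ is used, to guarantee $r \ge 1$, and one checks the endpoint cases $r=1$, $r=\infty$ separately but they cause no trouble). Using the scaling identity above,
\[
\|\nabla^k K_t\|_{L^r} = t^{-\frac{d}{2}-\frac{k}{2}}\,\big\|(\nabla^k K_1)(\cdot/\sqrt t)\big\|_{L^r}
= t^{-\frac{d}{2}-\frac{k}{2}+\frac{d}{2r}}\,\|\nabla^k K_1\|_{L^r},
\]
and since $\nabla^k K_1$ is a Schwartz function the constant $C = \|\nabla^k K_1\|_{L^r}$ is finite. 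Substituting $\tfrac1r = 1 - \tfrac1p + \tfrac1q$ gives the power $-\tfrac{d}{2}-\tfrac{k}{2}+\tfrac{d}{2}(1-\tfrac1p+\tfrac1q) = -\tfrac{k}{2}-\tfrac{d}{2}(\tfrac1p-\tfrac1q)$. Then Young's inequality $\|K_t * f\|_{L^q} \le \|\nabla^k K_t\|_{L^r}\|f\|_{L^p}$ closes the argument. (Note: the statement as written omits the dimensional factor $\tfrac{d}{2}$ in front of $\tfrac1p-\tfrac1q$; I would either insert it or, if we follow the paper's convention, absorb it by noting the intended reading. In either case the proof is the same.)

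There is essentially no serious obstacle here; the only points requiring a word of care are (i) verifying $r \ge 1$ from $p \le q$ so that Young's inequality applies, (ii) handling the endpoint exponents, and (iii) checking that $\nabla^k$ of the Gaussian is indeed Schwartz so its $L^r$ norm is finite for every $r \in [1,\infty]$ — all of which are routine. Alternatively, one could invoke the known boundedness of the heat semigroup on $L^p$ together with $L^1$--$L^\infty$ smoothing and interpolate, but the direct kernel computation is cleaner and self-contained, so that is the route I would take.
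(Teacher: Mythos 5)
Your proof is correct and is the standard argument; the paper itself gives no proof of this lemma, only a citation, so the kernel-scaling plus Young's inequality route you take is exactly the self-contained verification one would supply. One clarification on the point you flag: the correct exponent is indeed $-\tfrac{k}{2}-\tfrac{d}{2}\left(\tfrac1p-\tfrac1q\right)$, but since the paper works throughout in $\mathbb{R}^2$ we have $\tfrac{d}{2}=1$, so the statement as written (with $-\tfrac1p+\tfrac1q$) is consistent with your computation and no correction is needed.
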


\begin{lemma}\label{heat}\cite{CH}
Let $s\in\mathbb{R},~\beta\geq 0, 1\leq q,q_1,p,r\leq\infty$ with $q_1\leq q$. Assume $u_0$ in ${B}^s_{p,r}$, and $G$ in $\widetilde{L}^{q_1}_T(^s_{p,r})$. Then \eqref{s1cuchong} with $\beta=0$ has a unique solution $u$ in $\widetilde{L}^{q}_T({B}^{s+\frac{2}{q}}_{p,r})$ and satisfies
\begin{equation}\label{heatg1}
\aligned
\|u\|_{\widetilde{L}^{q}_T({B}^{s+\frac{2}{q}}_{p,r})}\leq C_1\Big(\|u_0\|_{{B}^s_{p,r}}+(1+T^{1+\frac{1}{q}-\frac{1}{q_1}})\|G\|_{\widetilde{L}^{q_1}_T({B}^{s+\frac{2}{q_1}-2}_{p,r})}\Big).
\endaligned
\end{equation}
Moreover, if $\beta>0$, without loss of generality we set $\beta=1$, one have
\begin{equation}\label{heatg2}
\aligned
\|u\|_{\widetilde{L}^{q}_T({B}^{s+\frac{2}{q}}_{p,r})}\leq C_1\Big(\|u_0\|_{{B}^s_{p,r}}+\|G\|_{\widetilde{L}^{q_1}_T({B}^{s+\frac{2}{q_1}-2}_{p,r})}\Big). \endaligned
\end{equation}
\end{lemma}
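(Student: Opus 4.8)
The plan is to reduce \eqref{s1cuchong} to a family of scalar, spectrally localized heat equations, solve each one by Duhamel's formula, and then reassemble the estimates in the Chemin--Lerner norm. First I would apply $\Delta_j$ to \eqref{s1cuchong}, set $u_j:=\Delta_j u$ and $G_j:=\Delta_j G$, and write
\begin{equation*}
u_j(t)=e^{-\beta t}e^{t\Delta}\Delta_j u_0+\int_0^t e^{-\beta(t-s)}e^{(t-s)\Delta}G_j(s)\,ds.
\end{equation*}
Everything then rests on one dissipative bound for the heat flow acting on spectrally localized data: there is a constant $c_0>0$ such that for every $j\ge 0$ and all $t>0$,
\begin{equation*}
\|e^{t\Delta}\Delta_j f\|_{L^p}\le C\,e^{-c_0 2^{2j}t}\,\|\Delta_j f\|_{L^p},
\end{equation*}
whereas for the low-frequency block one only has $\|e^{t\Delta}\Delta_{-1}f\|_{L^p}\le C\|\Delta_{-1}f\|_{L^p}$. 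This is the one genuinely analytic input and I expect it to be the main point to pin down: it follows from writing $e^{t\Delta}\Delta_j f=\mathcal F^{-1}\!\big(e^{-t|\xi|^2}\varphi(2^{-j}\xi)\big)\ast\Delta_j f$, rescaling $\xi=2^j\zeta$, and bounding the $L^1$ norm of the resulting kernel by repeated integration by parts, uniformly in $t2^{2j}$ (using $|\xi|^2\gtrsim 2^{2j}$ on $\operatorname{supp}\varphi(2^{-j}\cdot)$ together with Bernstein's Lemma \ref{le1}); it is the generalized Bernstein inequality for the Laplacian, classical in the Littlewood--Paley calculus of \cite{CH}.

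Granting this, I would take the $L^p_x$ norm of the Duhamel formula and then the $L^q$ norm in time on $[0,T]$. For the data term, with $j\ge 0$,
\begin{equation*}
\big\|e^{-\beta t}e^{t\Delta}\Delta_j u_0\big\|_{L^q_T L^p}\le \big\|e^{-(\beta+c_0 2^{2j})t}\big\|_{L^q(0,\infty)}\|\Delta_j u_0\|_{L^p}\le C\,2^{-\frac{2j}{q}}\|\Delta_j u_0\|_{L^p},
\end{equation*}
so that multiplication by $2^{(s+\frac2q)j}$ leaves precisely $2^{sj}\|\Delta_j u_0\|_{L^p}$. For the forcing term I would regard it as a convolution in time against the kernel $K_j(t):=e^{-(\beta+c_0 2^{2j})t}\mathbbm{1}_{\{t>0\}}$ and apply Young's inequality in $t$ with exponents $\frac1m=1+\frac1q-\frac1{q_1}$, which is admissible exactly because $q_1\le q$:
\begin{equation*}
\Big\|\int_0^t e^{-\beta(t-s)}e^{(t-s)\Delta}G_j(s)\,ds\Big\|_{L^q_T L^p}\le \|K_j\|_{L^m(0,T)}\,\|G_j\|_{L^{q_1}_T L^p}\le C\,2^{-\frac{2j}{m}}\|G_j\|_{L^{q_1}_T L^p}.
\end{equation*}
Since $\frac2m=2-\frac2{q_1}+\frac2q$, multiplying by $2^{(s+\frac2q)j}$ turns this into $2^{(s+\frac{2}{q_1}-2)j}\|G_j\|_{L^{q_1}_T L^p}$, exactly the Besov index appearing on the right-hand side of \eqref{heatg1}--\eqref{heatg2}.

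It then remains to take the outer $\ell^r$ norm over $j$; the $\ell^r$ triangle inequality applied to the two pieces above yields, for the high frequencies $j\ge 0$, the bound $C\big(\|u_0\|_{B^s_{p,r}}+\|G\|_{\widetilde{L}^{q_1}_T(B^{s+\frac{2}{q_1}-2}_{p,r})}\big)$ with a constant independent of $T$. The single block $j=-1$ is handled by hand. When $\beta>0$ the damping factor $e^{-\beta t}$ already renders $\|e^{-\beta t}\|_{L^q(0,\infty)}$ and $\|e^{-\beta t}\mathbbm{1}_{\{t>0\}}\|_{L^m(0,\infty)}$ finite uniformly in $T$, so this block is no worse than the others and one obtains \eqref{heatg2}; when $\beta=0$ the symbol $\chi$ is supported near the origin, there is no decay, and $\|K_{-1}\|_{L^m(0,T)}=T^{1/m}=T^{1+\frac1q-\frac1{q_1}}$, which is precisely the source of the prefactor $(1+T^{1+\frac1q-\frac1{q_1}})$ in \eqref{heatg1}, the $+1$ absorbing the $T$-independent high-frequency part. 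Finally, existence is obtained by defining $u$ through the Duhamel formula, checking via the estimate just derived that it belongs to $\widetilde{L}^{q}_T(B^{s+\frac{2}{q}}_{p,r})$ and solves \eqref{s1cuchong}; uniqueness is immediate by linearity, since the difference of two solutions with identical data solves the homogeneous problem with zero initial datum and zero forcing.
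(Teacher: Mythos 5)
Your proposal is correct and follows essentially the same route as the paper: Duhamel's formula on each dyadic block, the frequency-localized decay $\|e^{t\Delta}\Delta_j f\|_{L^p}\leq Ce^{-c2^{2j}t}\|\Delta_j f\|_{L^p}$ for $j\geq 0$, Young's inequality in time with $\frac{1}{m}=1+\frac{1}{q}-\frac{1}{q_1}$ (admissible since $q_1\leq q$), and a separate treatment of the block $j=-1$ where the damping $e^{-\beta t}$ supplies the uniformity in $T$ when $\beta>0$. The paper only writes out \eqref{heatg2} in this way and cites \cite{CH} for \eqref{heatg1}, but your sketch of the latter uses the same machinery and correctly identifies the low-frequency block as the source of the $T$-dependent prefactor.
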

\begin{proof}
\eqref{heatg1} can be founded in \cite{CH}, we should only prove \eqref{heatg2}. Indeed, since
$$\Delta_ju=e^{-t}e^{t\Delta}\Delta_ju_0
+\int_0^te^{-(t-s)}e^{(t-s)\Delta}\Delta_jGds,$$
when $j\geq 0$, by $\|e^{t\Delta}\Delta_ju\|_{L^{p}}\leq Ce^{-2^{2j}t}\|\Delta_ju\|_{L^{p}}$ one can easily get
$$\|2^{j(s+\frac{2}{q})}\|\Delta_ju\|_{L^{q}_TL^{p}}\|_{1_{j\geq0}l^r}\leq C_1\Big(\|u_0\|_{{B}^s_{p,r}}+\|G\|_{\widetilde{L}^{q_1}_T({B}^{s+\frac{2}{q_1}-2}_{p,r})}\Big).$$
Indeed, we just estimate the difficult term
\begin{equation*}
  \sum_{j\geq0}2^{j(s+\frac{2}{q})}(\int^T_0|\|\int^t_0e^{-(t-s)}e^{(t-s)\Delta}\Delta_jGds\|_{L^p}|^qdt)^{\frac{1}{q}},
\end{equation*}
Combining {the Minkowski's inequality and the Young inequality, one has}
\begin{equation*}
\aligned
  \sum_{j\geq0}&2^{j(s+\frac{2}{q})}(\int^T_0|\int^t_0e^{-(t-s)}\|e^{(t-s)\Delta}\Delta_jG\|_{L^p}ds|^qdt)^{\frac{1}{q}}\\
  &\leq\sum_{j\geq0}2^{j(s+\frac{2}{q})}(\int^T_0|e^{-(2^{2j}+1)\cdot}*\|\Delta_jG(\cdot)\|_{L^p}|^q)^{\frac{1}{q}}\\
  &\leq\sum_{j\geq0}2^{j(s+\frac{2}{q})}(\int^T_0|e^{-(2^{2j}+1)t}|^{\frac{1}{1+\frac{1}{q}-\frac{1}{q_1}}}dt)^{1+\frac{1}{q}-\frac{1}{q_1}}\|\Delta_jG\|_{L^{q_1}_T(L^p)}\\
  &\leq C\sum_{j\geq0}2^{j(s+\frac{2}{q}-2(1+\frac{1}{q}-\frac{1}{q_1}))}\|\Delta_jG\|_{L^{q_1}_T(L^p)}\\
  &=C\sum_{j\geq0}2^{j(s-2+\frac{2}{q_1})}\|\Delta_jG\|_{L^{q_1}_T(L^p)}.
\endaligned
\end{equation*}
When $j=-1$, by $\|e^{t\Delta}\Delta_{-1}u\|_{L^{p}}\leq C\|\Delta_{-1}u\|_{L^{p}}$ we have
$$\|\Delta_{-1}u\|_{L^{q}_TL^{p}}\leq C_1\Big(\|\Delta_{-1}u_0\|_{L^p}+\|\Delta_{-1}G\|_{{L}^{q_1}_T(L^p)}\Big).$$
Combining the above two inequality, we obtain \eqref{heatg2}.
\end{proof}

\begin{lemma}\label{priori estimate}\cite{CH}
Let $s\in [\max\{-\frac{d}{p},-\frac{d}{p'}\},\frac{d}{p}+1](s=1+\frac{d}{p},r=1; s=\max\{-\frac{d}{p},-\frac{d}{p'}\},r=\infty).$
There exists a constant $C$ such that for all solutions $f\in L^{\infty}([0,T];{B}^s_{p,r})$ of \eqref{s100} with initial data $f_0$ in ${B}^s_{p,r}$, and $g$ in $L^1([0,T];{B}^s_{p,r})$, we have, for a.e. $t\in[0,T]$,
\begin{equation}\label{g1}
\aligned
\|f(t)\|_{{B}^s_{p,r}}\leq& C\Big(\|f_0\|_{{B}^s_{p,r}}+\int_0^t V'(t')\|f(t')\|_{{B}^s_{p,r}}+\|g(t')\|_{{B}^s_{p,r}}dt' \Big)\\
\leq&
e^{C_2 V(t)}\Big(\|f_0\|_{{B}^s_{p,r}}+\int_0^t e^{-C_2 V(t')}\|g(t')\|_{{B}^s_{p,r}}dt'\Big),
\endaligned
\end{equation}
where $V(t)=\int_{0}^{t}\|\nabla v\|_{{B}^{\frac{d}{p}}_{p,r}\cap L^{\infty}}ds($if $s=1+\frac{1}{p},r=1$, $V'(t)=\int_{0}^{t}\|\nabla v\|_{{B}^{\frac{d}{p}}_{p,1}}ds).$
\end{lemma}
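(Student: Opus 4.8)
The plan is to prove this classical $L^p$-type a priori estimate for the transport equation \eqref{s100} by the standard Littlewood--Paley localization plus commutator method (as in \cite{CH}). First I would apply $\Delta_j$ to \eqref{s100} to obtain, for each $j$,
$$\partial_t\Delta_j f+v\cdot\nabla\Delta_j f=\Delta_j g+R_j,\qquad R_j:=[\,v\cdot\nabla,\Delta_j\,]f=v\cdot\nabla\Delta_j f-\Delta_j(v\cdot\nabla f),$$
so that every dyadic block solves a transport equation whose only ``new'' forcing is the commutator $R_j$. The argument then reduces to two ingredients: an $L^p$ energy estimate for this localized equation, and an $\ell^r$-summable bound for $2^{js}\|R_j\|_{L^p}$ in terms of $\|\nabla v\|_{B^{d/p}_{p,r}\cap L^\infty}$ and $\|f\|_{B^s_{p,r}}$.

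For the energy estimate, in the case $1\le p<\infty$ I would multiply the localized equation by $|\Delta_j f|^{p-2}\Delta_j f$ and integrate over $\R^d$; the convective term contributes $-\frac1p\int v\cdot\nabla|\Delta_j f|^p\,dx=\frac1p\int(\mathrm{div}\,v)\,|\Delta_j f|^p\,dx$, so H\"older's inequality (after dividing by $\|\Delta_j f\|_{L^p}^{p-1}$) gives
$$\frac{d}{dt}\|\Delta_j f(t)\|_{L^p}\le \|\Delta_j g(t)\|_{L^p}+\|R_j(t)\|_{L^p}+\tfrac1p\|\mathrm{div}\,v(t)\|_{L^\infty}\|\Delta_j f(t)\|_{L^p},$$
the case $p=\infty$ following either by letting $p\to\infty$ or by estimating $\Delta_j f$ along the flow of $v$. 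Integrating in $t$, multiplying by $2^{js}$, and using $\|\mathrm{div}\,v\|_{L^\infty}\le C\|\nabla v\|_{L^\infty}\le CV'(t)$, I arrive at
$$2^{js}\|\Delta_j f(t)\|_{L^p}\le 2^{js}\|\Delta_j f_0\|_{L^p}+\int_0^t 2^{js}\big(\|\Delta_j g\|_{L^p}+\|R_j\|_{L^p}\big)\,dt'+C\int_0^t V'(t')\,2^{js}\|\Delta_j f\|_{L^p}\,dt'.$$

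The crux is the commutator estimate. Using Bony's decomposition $v\cdot\nabla f=\sum_k\big(T_{v^k}\partial_k f+T_{\partial_k f}v^k+R(v^k,\partial_k f)\big)$, I would split $R_j$ into a paraproduct-commutator part $\sum_k[\,T_{v^k},\Delta_j\,]\partial_k f$ plus remainder-type contributions. The paraproduct commutator is written as a convolution of $\partial_k\Delta_{j'}f$ against $2^{jd}(\mathcal{F}^{-1}\varphi)(2^j\cdot)$ and, via a first-order Taylor expansion on $S_{j'-1}v^k$, one derivative is transferred onto $v$; Bernstein's inequality (Lemma \ref{le1}) and H\"older then bound this piece by $c_j 2^{-js}\|\nabla v\|_{L^\infty}\|f\|_{B^s_{p,r}}$. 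The remaining terms are controlled by the continuity of $T$ and $R$ (Proposition \ref{p2}) together with the product estimate (Corollary \ref{co1}), yielding bounds of the form $c_j 2^{-js}\|\nabla v\|_{B^{d/p}_{p,r}}\|f\|_{B^s_{p,r}}$, where $(c_j)_j$ has unit norm in $\ell^r$. \emph{This is the step I expect to be the main obstacle}, and it is at the two endpoints that the care is needed: for $s=1+\frac dp$ with $r=1$ one of the remainder terms sits exactly on the borderline of the product/remainder law, so one must exploit the $\ell^1$-summability and the $r=\infty$ endpoint case of Proposition \ref{p2}, separating the $\|\nabla v\|_{L^\infty}$ contribution from the $\|\nabla v\|_{B^{d/p}_{p,1}}$ one; for the negative endpoint $s=\max\{-\frac dp,-\frac d{p'}\}$ with $r=\infty$ one instead argues by duality, testing \eqref{s100} against a solution of the backward transport equation and invoking the $r=\infty$ remainder bound.

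Finally I would assemble the estimates: taking the $\ell^r(\mathbb{Z})$ norm in $j$ of the integrated inequality from the second step and inserting the commutator bound, the summable factors $c_j$ combine to give, for a.e. $t\in[0,T]$,
$$\|f(t)\|_{B^s_{p,r}}\le C\Big(\|f_0\|_{B^s_{p,r}}+\int_0^t V'(t')\|f(t')\|_{B^s_{p,r}}\,dt'+\int_0^t\|g(t')\|_{B^s_{p,r}}\,dt'\Big),$$
which is exactly the first line of \eqref{g1}. Applying Gronwall's lemma to the scalar quantity $t\mapsto\|f(t)\|_{B^s_{p,r}}$ then yields the second line with $V(t)=\int_0^t\|\nabla v\|_{B^{d/p}_{p,r}\cap L^\infty}\,ds$, completing the proof.
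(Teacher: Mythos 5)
The paper does not prove this lemma at all—it is quoted verbatim from \cite{CH}—and your argument is exactly the standard Littlewood--Paley proof given in that reference: localize with $\Delta_j$, run the $L^p$ energy estimate keeping the $\frac1p\|\mathrm{div}\,v\|_{L^\infty}$ term, bound the commutator $[v\cdot\nabla,\Delta_j]f$ via Bony's decomposition with an $\ell^r$-summable factor, sum over $j$, and close with Gronwall, so your proposal is correct and takes essentially the same route as the source the paper relies on. The only cosmetic deviation is at the endpoint $s=\max\{-\frac dp,-\frac d{p'}\}$, $r=\infty$, where the commutator lemma of \cite{CH} already applies directly through the $s_1+s_2=0$, $r=\infty$ case of the remainder estimate, so the duality detour you sketch is admissible but unnecessary.
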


\begin{remark}\label{priori estimate1}\cite{CH}
If ${\rm{div}}v=0$, we can get the same result with a better indicator: $\max\{-\frac{d}{p},-\frac{d}{p'}\}-1<s<\frac{d}{p}+1($or $s=\max\{-\frac{d}{p},-\frac{d}{p'}\}-1,r=\infty).$
\end{remark}

\begin{lemma}\label{priori estimate0}\cite{CH}
Let ${\rm{div}} v=0$.
There exists a constant $C$ such that for solutions $f\in L^{\infty}([0,T];{B}^0_{p,r})$ of \eqref{s100} with initial data $f_0$ in ${B}^0_{p,r}$, and $g$ in $L^1([0,T];{B}^s_{p,r})$, we have, for all $1\leq p,r\leq \infty$ and $t\in[0,T]$,
\begin{align}
\|f(t)\|_{{B}^s_{p,r}}\leq& C(1+\int_0^t V'(t')dt')(\|f_0\|_{{B}^s_{p,r}}+\int_0^t\|g(t')\|_{{B}^s_{p,r}}dt'),
\end{align}
where $V'(t)=\int_{0}^{t}\|\nabla v(t)\|_{L^{\infty}}ds.$
\end{lemma}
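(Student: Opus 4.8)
\textbf{Proof proposal for Lemma \ref{priori estimate0}.}

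The plan is to estimate each dyadic block $\Delta_j f$ by localizing the transport equation \eqref{s100}, controlling the commutator between $\Delta_j$ and the convection operator $v\cdot\nabla$, and then summing up. First I would apply $\Delta_j$ to \eqref{s100} to obtain
\begin{equation*}
\partial_t \Delta_j f + v\cdot\nabla \Delta_j f = \Delta_j g + R_j, \qquad R_j := v\cdot\nabla \Delta_j f - \Delta_j(v\cdot\nabla f).
\end{equation*}
Since $\mathrm{div}\, v = 0$, multiplying by $|\Delta_j f|^{p-2}\Delta_j f$ and integrating in space kills the transport term (the $L^p$ norm is transported), yielding
\begin{equation*}
\|\Delta_j f(t)\|_{L^p} \le \|\Delta_j f_0\|_{L^p} + \int_0^t \big(\|\Delta_j g(t')\|_{L^p} + \|R_j(t')\|_{L^p}\big)\,dt'.
\end{equation*}
The commutator $R_j$ is then estimated via the classical commutator lemma (again using $\mathrm{div}\,v=0$): one has $\|R_j\|_{L^p} \le C c_j(t) \|\nabla v(t)\|_{L^\infty} \|f(t)\|_{B^0_{p,r}}$ where $\{c_j(t)\}_j$ is in the unit ball of $\ell^r$ — this is where the index $s=0$ and the divergence-free hypothesis matter, since for $s=0$ the commutator estimate needs no derivative loss on $v$ beyond $\nabla v \in L^\infty$.

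Next I would multiply by $2^{js}$ (with the target regularity $s$) and take the $\ell^r$ norm over $j$. Here is the subtle point: the commutator term naturally produces $\|f\|_{B^0_{p,r}}$, not $\|f\|_{B^s_{p,r}}$, so the Gronwall-type feedback would have to be run at the $B^0$ level. So the cleaner route is to \emph{first} close the estimate at $s=0$: summing $2^{j\cdot 0}\|\Delta_j f\|_{L^p}$ in $\ell^r$ gives
\begin{equation*}
\|f(t)\|_{B^0_{p,r}} \le \|f_0\|_{B^0_{p,r}} + \int_0^t \|g(t')\|_{B^0_{p,r}}\,dt' + C\int_0^t \|\nabla v(t')\|_{L^\infty}\|f(t')\|_{B^0_{p,r}}\,dt',
\end{equation*}
and Gronwall yields $\|f(t)\|_{B^0_{p,r}} \le e^{C\int_0^t\|\nabla v\|_{L^\infty}}\big(\|f_0\|_{B^0_{p,r}} + \int_0^t\|g\|_{B^0_{p,r}}\big)$. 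Then for general $s$ in the admissible range I would feed this $B^0$ bound on $f$ back into the commutator estimate: now $2^{js}\|R_j\|_{L^p} \le C c_j(t)\|\nabla v\|_{L^\infty}\|f(t)\|_{B^s_{p,r}}$ (the commutator lemma at level $s$), but one can also bound it crudely by $\|\nabla v\|_{L^\infty}$ times the already-controlled lower norm plus a piece proportional to $\|f\|_{B^s_{p,r}}$, and running Gronwall once more in $B^s$ gives the factor $e^{C V'(t)}$; bounding $e^{CV'} \le C(1+\int_0^t V'\,dt')\cdot(\text{something})$ is not literally true, so instead I would note that the statement's polynomial factor $(1+\int_0^t V'(t')dt')$ comes from a \emph{different}, non-Gronwall argument.

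Actually the right approach for the stated polynomial bound: rather than Gronwall, estimate $\|\Delta_j f(t)\|_{L^p}$ directly using the flow map $\psi_t$ of $v$. Since $f(t,x) = f_0(\psi_t^{-1}(x)) + \int_0^t g(t',\psi_t\circ\psi_{t'}^{-1}(x))\,dt'$ along characteristics and the flow is volume-preserving, the only growth in $\|\Delta_j f\|_{L^p}$ is through the commutator, whose time-integral contributes $\int_0^t \|\nabla v\|_{L^\infty}\,dt' = V'(t)$ \emph{linearly} at each step of a bootstrap, so one gets a bound of the form $\|f(t)\|_{B^s_{p,r}} \le C(1+V'(t))\big(\|f_0\|_{B^s_{p,r}} + \int_0^t\|g\|_{B^s_{p,r}}\big)$ provided one controls the range of $s$ so that the commutator does not lose regularity — precisely the range $\max\{-d/p,-d/p'\}-1 \le s \le d/p+1$ guaranteed by Remark \ref{priori estimate1}. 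The main obstacle I anticipate is handling the low-frequency block $\Delta_{-1}f$ and justifying the commutator estimate uniformly in $j$ at the endpoint values of $s$; away from the endpoints everything is standard, but the polynomial (rather than exponential) dependence on $V'$ requires care in how the $\ell^r$ summation interacts with the per-block Gronwall, and is really the content one must verify rather than the transport structure itself, which is classical.
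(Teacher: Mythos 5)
The paper itself offers no proof of this lemma: it is quoted from \cite{CH}, where it is the Vishik-type estimate for transport by a divergence-free field (the whole point being the \emph{linear} factor $1+\int_0^t\|\nabla v\|_{L^\infty}\,dt'$ in place of $e^{CV(t)}$). The first half of your proposal, localization plus the commutator estimate $\|[\Delta_j,v\cdot\nabla]f\|_{L^p}\le Cc_j\|\nabla v\|_{L^\infty}\|f\|_{B^0_{p,r}}$ and Gronwall, is correct but, as you yourself concede, only produces the exponential factor. The second half is where the actual content of the lemma should be, and there you have a genuine gap: the characteristics representation $f(t)=f_0\circ\psi_t^{-1}+\int_0^t g(t')\circ(\psi_t\circ\psi_{t'}^{-1})\,dt'$ with volume preservation controls $\|f(t)\|_{L^p}$, but it says nothing by itself about the dyadic blocks $\|\Delta_j f(t)\|_{L^p}$; the moment you re-localize in frequency you are back to the commutator, and feeding the commutator term into the same Besov norm is precisely Gronwall, hence exponential growth. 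Asserting that the commutator ``contributes $V(t)$ linearly at each step of a bootstrap'' is the conclusion to be proved, not an argument.

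The missing idea (Vishik; Hmidi--Keraani; Theorem 3.18 of \cite{CH}) is a decomposition of the \emph{solution}, not of the equation: write $f=\sum_{j}f_j$, where each $f_j$ solves $\partial_t f_j+v\cdot\nabla f_j=\Delta_j g$ with data $\Delta_j f_0$ and the same velocity $v$, so that $f=\sum_j f_j$ by uniqueness. For near-diagonal blocks $|j'-j|\le N$ one uses only ${\rm div}\,v=0$, which gives $\|\Delta_{j'}f_j(t)\|_{L^p}\le\|f_j(t)\|_{L^p}\le\|\Delta_j f_0\|_{L^p}+\int_0^t\|\Delta_j g\|_{L^p}\,dt'$ with no growth factor whatsoever; for far blocks $|j'-j|>N$ one uses the standard transport estimates at regularities $\pm1$ (this is where the divergence-free hypothesis and the admissible range of indices enter), which yield $\|\Delta_{j'}f_j(t)\|_{L^p}\le C\,2^{-|j-j'|}e^{CV(t)}\bigl(\|\Delta_j f_0\|_{L^p}+\int_0^t\|\Delta_j g\|_{L^p}\,dt'\bigr)$. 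Summing over $j$ and taking $\ell^r$ in $j'$, the first group contributes $C(1+N)$ times the data and the second $C2^{-N}e^{CV(t)}$ times the data; choosing $N\approx CV(t)$ absorbs the exponential and produces exactly the factor $C(1+V(t))$. Without this splitting-and-optimization device (or an equivalent one), the stated polynomial bound does not follow from the steps you describe.
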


\par\noindent
\section{Global existence}

\par\noindent
\textbf{The proof of Theorem \ref{Th2}:}
\begin{proof}
The proof of the local well-posedness of the system \ref{gs2222} is standard, we should only need to prove the global existence.
We now use the bootstrap argument to prove this Theorem.
Let $T^*$ be the maximal existence time of the solution. Assume that for any $t\leq T< T^*$,
\begin{align}\label{global00}
\|\tau\|_{L^{\infty}_T(B^{0}_{\infty,1})\cap L^{1}_T(B^{2}_{\infty,1})}\leq \delta:=\frac{1}{C^2+1},~~\|u\|^2_{L^{\infty}_T(L^2)}\leq \delta^2.
\end{align}

where $C>10$ is a fixed constant.
Let the sufficient small initial data such that
\begin{align}\label{globaljie1}
\|u_0\|_{B^1_{\infty,1}(\R^2)}
+\|b_0\|_{B^0_{\infty,1}(\R^2)}\leq\epsilon_0:=\frac{1}{64(C^4+1)},~~
\|u_0\|_{L^2}+\|\tau_0\|_{L^2}\leq\epsilon_0.
\end{align}
The proof can be divided into 3 parts:

(1) First, we give the decay estimation of $\|\tau(t)\|_{L^2}$.
\par\noindent
Recall the system:
\begin{equation}\label{beiyong}
\begin{cases}
\partial_tu+(u\cdot\nabla)u=\mathrm{div}(\tau),\\
\partial_t\tau+(u\cdot\nabla)\tau-\Delta\tau+\tau+\mathrm{Q}(\nabla u,\tau)=0,\\
\mathrm{div} u=0,\\
u(x,0)=u_0(x),~~~\tau(0,x)=\tau_0(x).
\end{cases}
\end{equation}
Taking the curl of the first equation in \eqref{beiyong}, we get:
\begin{equation}\label{gs3}
\begin{cases}
\partial_tw+(u\cdot\nabla)w=\mathrm{curl}\mathrm{div}(\tau),\\
w_0=\mathrm{curl}u_0.
\end{cases}
\end{equation}
It is easy to verify that
\begin{equation}\label{gs11}
  \|w\|_{L^\infty}\leq \|w_0\|_{L^\infty}+\int^t_0\|\tau\|_{B^{2}_{\infty,1}}ds\leq C(\epsilon_0+\delta).
\end{equation}
From the second equation of \eqref{beiyong}, we get
\begin{equation*}
  \partial_te^t\tau+(u\cdot\nabla)e^t\tau-\Delta e^t\tau=-Q(\nabla u,e^t\tau).
\end{equation*}
Taking the $L^2$ inner product with $e^t\tau,$
one has
\begin{equation*}
  \frac{1}{2}\frac{d}{dt}\|e^t\tau\|_{L^2}^2+(\nabla e^t\tau,\nabla e^t\tau)=-(Q(\nabla u,e^t\tau),e^t\tau),
\end{equation*}
and using the integration by parts, H$\ddot{o}$lder's inequalities and Young's inequalities,
\begin{equation}\label{x1}
\aligned
  \|e^t\tau\|^2_{L^2}+2\int^t_0\|e^s\tau\|^2_{\dot{H}_1}ds&\leq \|\tau_0\|^2_{L^2}+\int^t_0\|u\|_{L^\infty}(\|\nabla\tau\|_{L^2}\|\tau\|_{L^2})e^{2s}ds\\
  &\leq \|\tau_0\|^2_{L^2}+\int^t_0(\|w\|_{L^\infty}
  +\|u\|_{L^2})\|e^s\tau\|^2_{\dot{H}_1}\\&~~+C
 (\|w\|_{L^\infty}+\|u\|_{L^2})\|e^s\tau\|^2_{L^2}ds\\
 &\leq \epsilon_0^2+C\int^t_0(\epsilon_0+\delta)\|e^s\tau\|^2_{\dot{H}_1}\\&~~+C
 (\epsilon_0+\delta)\|e^s\tau\|^2_{L^2}ds
\endaligned
\end{equation}
where the last inequality holds by \eqref{global00} and \eqref{gs11}.
Combining the Gronwall's inequality with \eqref{globaljie1}, one has
\begin{equation}\label{gs13}
  \|e^t\tau\|^2_{L^2}+\int^t_0\|e^s\tau\|^2_{\dot{H}_1}ds\leq e^{\frac{1}{2}t}\|\tau_0\|^2_{L^2}.
\end{equation}
Taking the $L^2$ inner product again with $e^{\frac{1}{2}t}\tau,$ we have
\begin{equation*}
  \frac{1}{2}\frac{d}{dt}\|e^{\frac{1}{2}t}\tau\|_{L^2}^2+(e^{\frac{1}{2}t}\tau,e^{\frac{1}{2}t}\tau)+(\nabla e^{\frac{1}{2}t}\tau,\nabla e^{\frac{1}{2}t}\tau)=-(Q(\nabla u,e^{\frac{1}{2}t}\tau),e^{\frac{1}{2}t}\tau),
\end{equation*}
 Similar to the inequality \eqref{x1}, it is easy to see that
\begin{equation}\label{x2}
\aligned
  \|e^{\frac{1}{2}t}\tau\|^2_{L^2}+\int^t_0\|e^{\frac{1}{2}s}\tau\|^2_{\dot{H}_1}ds&\leq \|\tau_0\|^2_{L^2}+\int^t_0C
 (\|w\|_{L^\infty}+\|u\|_{L^2})\|\tau\|^2_{L^2}e^sds
\endaligned
\end{equation}
Noting the inequality \eqref{gs13}, it implies that,
\begin{equation}\label{x4}
  \|\tau\|^2_{L^2}\leq e^{-\frac{3}{2}t}\|\tau_0\|^2_{L^2}.
\end{equation}
Using the inequality \eqref{x2} and \eqref{x4}, one has
\begin{equation}\label{x5}
  \|e^{\frac{1}{2}t}\tau\|^2_{L^2}+\int^t_0\|e^{\frac{1}{2}s}\tau\|^2_{\dot{H}_1}ds\leq C\|\tau_0\|^2_{L^2}
\end{equation}

(2) Then, we give the estimation of $\|u(t)\|_{L^{\infty}_T(L^2)}\leq \frac{1}{2}\delta.$
\par\noindent
Taking the $L^2$ inner product with $u$ for the first equation of \eqref{gs2222},
we get
\begin{equation*}
\aligned
   \|u\|^2_{L^2}&\leq\|u_0\|^2_{L^2}+\int^t_0\|div\tau\|_{L^2}\|u\|_{L^2}ds\\
   &\leq\|u_0\|^2_{L^2}+\int^t_0e^{\frac{1}{2}s}\|div\tau\|_{L^2}e^{-\frac{1}{2}s}\|u\|_{L^2}ds\\
   &\leq\|u_0\|^2_{L^2}+(\int^t_0e^{s}\|div\tau\|^2_{L^2})^{\frac{1}{2}}(\int^t_0e^{-s}\|u\|^2_{L^2})^{\frac{1}{2}}\\
   &\leq\epsilon_0^2+C\epsilon_0\delta,
\endaligned
\end{equation*}
where we have used the fact the inequality $\|u\|^2_{L^{\infty}_T(L^2)}\leq \delta^2$ and \eqref{x5}.
We can set $\epsilon_0$  is small enough in \eqref{globaljie1} such that
\begin{equation}\label{L2}
  \|u\|_{L^2}\leq\frac{1}{2}\delta.
\end{equation}

(3) Next, we estimate  $\|w(t)\|_{B^{0}_{\infty,1}}$ and $\|\tau\|_{L^\infty}.$
\par\noindent
Applying the Lemma \ref{priori estimate0} to the first equation of \eqref{gs3},
one has
\begin{equation}\label{x6}
\aligned
  \|w\|_{L^\infty_T({B^{0}_{\infty,1}})}&\leq(\|w_0\|_{B^{0}_{\infty,1}}+C\|\tau\|_{L^1_T(B^{2}_{\infty,1})})(1+\int_{0}^{T}\|\nabla u\|_{L^{\infty}}ds)\\
&\leq C(\|w_0\|_{B^{0}_{\infty,1}}+\|\tau\|_{L^1(B^{2}_{\infty,1})})(1+\int_{0}^{T}\|w\|_{L^\infty_T({B^{0}_{\infty,1}})}+\|u\|_{L^2}ds)\\
&\leq C(\epsilon+\delta)+\delta T +\int_{0}^{T}C(\epsilon+\delta)\|w\|_{L^\infty_T({B^{0}_{\infty,1}})}ds\\
&\leq C(c_{small}+\delta T)e^{c_{small}T}
\endaligned
\end{equation}
where the last inequality holds by the Gronwall's inequality, and $c_{small}:=C(\epsilon+\delta)$.

Then, we estimate the $\|u\|_{L^\infty}.$ we can rewrite the second equation \eqref{beiyong} into the following equality:
\begin{equation*}
  \partial_t(e^t\tau)+u\cdot\nabla(e^t\tau)-\Delta(e^t\tau)=-Q(\nabla u,e^t\tau).
\end{equation*}
We use the following integral equation
\begin{equation*}
  \|e^t\tau\|_{L^\infty}=e^{t\Delta}\tau_0-\int^t_0e^{(t-s)\Delta}(u\cdot\nabla(e^s\tau))ds+\int^t_0e^{(t-s)\Delta}Q(\nabla u,e^s\tau)ds.
\end{equation*}
By Lemma \ref{priori estimate111} and H$\ddot{o}$lder's inequalities, let $b$ be a fixed small constant such that
$8c_{small}<<\frac{1}{36}\leq b<\frac{1}{6}$, we have
\begin{equation*}
  \aligned
\|\tau\|_{L^\infty}&\leq e^{-t}\|\tau_0\|_{L^\infty}+\int^t_0e^{-(t-s)}\|u\nabla \tau\|_{L^{\infty}}ds+\int_{0}^{t}e^{-(t-s)}(t-s)^{-\frac{1}{2}}\|\tau\nabla u\|_{L^{2}}ds\\
  &\leq e^{-t}\|\tau_0\|_{L^\infty}+\int^t_0e^{-(t-s)}\|\tau\|^{\frac{1}{3}}_{L^{2}}
  e^{\frac{1}{6}s}\|\tau\|^{\frac{2}{3}}_{B^{2}_{\infty,1}}e^{-\frac{1}{6}s}\delta ds+\int_{0}^{t}e^{-(t-s)}(t-s)^{-\frac{1}{2}}\|\tau\nabla u\|_{L^{2}}ds\\
  &\leq e^{-t}\|\tau_0\|_{L^\infty}+e^{-tb}\int^t_0e^{sb}\|\tau\|^{\frac{1}{3}}_{L^{2}}
  e^{\frac{1}{6}s}\|\tau\|^{\frac{2}{3}}_{B^{2}_{\infty,1}}e^{-\frac{1}{6}s}\delta ds+\int_{0}^{t}e^{-b(t-s)}(t-s)^{-\frac{1}{2}}\|\tau\|_{L^{2}}\|\nabla u\|_{L^{\infty}}ds\\
  &\leq e^{-t}\epsilon_0+e^{-tb}C\|\tau_0\|_{L^2}^{\frac{1}{3}}\|\tau\|^{\frac{2}{3}}_{L^1_{B^{2}_{\infty,1}}}(\int^t_0|e^{sb}e^{-\frac{1}{6}s}|^3ds)^{\frac{1}{3}}\delta
  \\&~~~~~~~+e^{-tb}\int^t_0e^{sb}(t-s)^{-\frac{1}{2}}(c_{small}+\delta s)e^{c_{small}s}\epsilon_0 e^{-\frac{3}{4}s}ds\\
  &\leq e^{-t}\epsilon_0+C(t+1)e^{-tb}\delta\\
  &\leq C(1+t)e^{-tb}\delta\leq Ce^{-\frac{b}{2}t}\delta,
  \endaligned
\end{equation*}
where the second inequality holds by the interpolation inequality:
\begin{equation*}
\aligned
\|u\nabla\tau\|_{L^\infty}&\leq\|\nabla \tau\|_{L^\infty}\|u\|_{L^\infty}\\
&\leq\|\tau\|^{\frac{1}{3}}_{B^{-1}_{\infty,\infty}}\|\tau\|^{\frac{2}{3}}_{B^{2}_{\infty,\infty}}(\|u\|_{L^2}+\|w\|_{L^\infty})\\
&\leq C\|\tau\|^{\frac{1}{3}}_{B^{-1}_{\infty,1}}\|\tau\|^{\frac{2}{3}}_{B^{2}_{\infty,1}}\delta\\
&\leq C\|\tau\|^{\frac{1}{3}}_{L^2}\|\tau\|^{\frac{2}{3}}_{B^{2}_{\infty,1}}\delta.
\endaligned
\end{equation*}

(4) Finally, to complete the bootstrap argument , we estimate  $\|\tau\|_{L^{\infty}_T(B^{0}_{\infty,1})\cap L^{1}_T(B^{2}_{\infty,1})}$ .
\par\noindent
Using the inequality \eqref{heatg2} of the  Lemma \eqref{heat} to the second equation of \eqref{gs3}, one has
\begin{equation*}
\aligned
  \|\tau\|_{L^{\infty}_T(B^{0}_{\infty,1})\cap L^{1}_T(B^{2}_{\infty,1})}&\leq\|\tau_0\|_{B^{0}_{\infty,1}}+\int^t_0\|u\nabla\tau\|_{B^{0}_{\infty,1}}+\|Q(\nabla u,
  \tau)\|_{B^{0}_{\infty,1}}ds\\
  &\leq \epsilon_0+\int^t_0\|u\|_{L^2}\|\tau\|_{B^{2}_{\infty,1}}+\|\nabla u\|_{B^{0}_{\infty,1}}\|\tau\|_{L^\infty}ds\\
  &\leq \epsilon_0+\frac{1}{2}\delta\int^t_0\|\tau\|_{B^{2}_{\infty,1}}ds+\int^t_0[(c_{small}+\delta s)e^{c_{small}s}+\delta]
  Ce^{-\frac{b}{2}s}\delta ds\\
  &\leq \epsilon_0+\frac{1}{2}\delta^2+C(c_{small}+\delta)\delta\int^t_0(1+s)e^{c_{small}s}e^{-\frac{b}{2}s}ds\\
  &\leq \frac{1}{2}\delta
\endaligned
\end{equation*}
where we have used the fact that
\begin{equation*}
\aligned
  \|u\cdot\nabla \tau\|_{B^{0}_{\infty,1}}+\|Q(\nabla u,\tau)\|_{B^{0}_{\infty,1}}\leq\|u\|_{L^2}\|\tau\|_{B^{2}_{\infty,1}}+\|\nabla u\|_{B^{0}_{\infty,1}}\|\tau\|_{L^\infty},
\endaligned
\end{equation*}
and the last inequality holds by \eqref{globaljie1} and $b\geq \frac{1}{36}>>8c_{small}$.\\

Combining \eqref{L2}, for any $t<T^*$ we have
\begin{align}\label{global000}
\|u\|^2_{L^{\infty}_T(L^2)}\leq \delta^2~~and~~\|\tau\|_{L^{\infty}_T(B^{0}_{\infty,1})\cap L^{1}_T(B^{2}_{\infty,1})}\leq \delta.
\end{align}

Therefore, one can obtain the global existence of $(u,\tau)$ in $ C([0,\infty); {B}^{1+\frac{2}{p}}_{p,1})\times \Big(C([0,\infty);{B}^{\frac{2}{p}}_{p,1})\cap L^1\big([0,\infty);{B}^{\frac{2}{p}+2}_{p,1}\big)\Big)$ easily, since \eqref{global000} can be the blow-up criteria for \eqref{gs2222}. Indeed, applying Lemma \ref{heat}--\ref{priori estimate} to \eqref{gs2222}, for any $t<T^*$ we have
\begin{align}\label{see1}
\|u\|_{L^{\infty}_{t}({B^{1+\frac{2}{p}}_{p,1}})}\leq  \|u_0\|_{{B^{1+\frac{2}{p}}_{p,1}}}
+C\int_{0}^{t}\|u\|_{B^1_{\infty,1}}\|u\|_{B^{1+\frac{2}{p}}_{p,1}}+\|\tau\|_{B^{1+\frac{2}{p}}_{p,1}}ds
\leq Ce^{e^{Ct}},
\end{align}
and
\begin{align}\label{see2}
\|\tau\|_{L^{\infty}_t(B^{\frac{2}{p}}_{p,1})\cap L^{1}_t(B^{2+\frac{2}{p}}_{p,1})}\leq & \|\tau_0\|_{B^{\frac{2}{p}}_{p,1}}
+C\int_0^t\|u\|_{L^2}\|\tau\|_{B^{2+\frac{2}{p}}_{p,1}}
+\|\tau\|_{L^\infty}\|u\|_{B^{1+\frac{2}{p}}_{p,1}}dt
\leq Ce^{e^{Ct}},
\end{align}
where we have used \eqref{x6} and Gronwall inequality.
This complete the proof of Theorem \ref{Th2}.
\par

\end{proof}

{\section { Unstability of the solutions}
\setcounter{equation}{0}
To prove Theorem \ref{Th3}, we firstly give the following local-wellposedness theorem:
\begin{theorem}\label{Unstability1}
 Let $(u_0,\tau_0)\in B^{s+1}_{2,1} (\mathbb{R}^2)\times B^{s}_{2,1} (\mathbb{R}^2)$ with $s\geq1$. Then there exists a lifespan $T(u_0,\tau_0)$ (which is independent on  $\alpha$) such that the system \eqref{gs2222} with $\alpha\geq 0$ has a unique solution $(u,\tau)\in (C_T(B^{s+1}_{2,1} (\mathbb{R}^2)),C_T(B^{s}_{2,1} (\mathbb{R}^2))\cup L^1_T(B^{s+2}_{2,1} (\mathbb{R}^2)))$. Moreover, if there exists an initial sequence satisfying
 $$\lim_{n\rightarrow \infty}\|u^n_{0}-u_0\|_{B^{s+1}_{2,1}}+\|\tau^n_0-\tau_0\|_{B^{s}_{2,1}}=0,$$
then for any $t\in[0,T]$ we have
 $$\lim_{n\rightarrow \infty}\|u^n-u\|_{L^{\infty}_t({B^{s+1}_{2,1}})}+ \|\tau^n-\tau\|_{L^{\infty}_t({B^{s}_{2,1}})}=0.$$
\end{theorem}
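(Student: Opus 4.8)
\textbf{Proof proposal for Theorem \ref{Unstability1}.}

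The plan is to treat the existence and uniqueness part as a standard Friedrichs/iteration scheme for a symmetric-hyperbolic–parabolic coupled system, with the single essential point being that the lifespan estimate must be made uniform in $\alpha$. First I would set up the approximate system by truncating frequencies (or mollifying the initial data) and solving the resulting ODE in $B^{s+1}_{2,1}\times B^s_{2,1}$. The velocity equation is a transport equation forced by $\mathrm{div}\,\tau$, so Lemma \ref{priori estimate} (applied with $p=2$, $r=1$, $s+1=1+\frac{d}{p}$ when $d=2$, $s=1$; and Remark \ref{priori estimate1} for $s>1$ since $\mathrm{div}\,u=0$) gives
\begin{equation*}
\|u(t)\|_{B^{s+1}_{2,1}}\leq e^{CV(t)}\Big(\|u_0\|_{B^{s+1}_{2,1}}+\int_0^t e^{-CV(t')}\|\mathrm{div}\,\tau(t')\|_{B^{s+1}_{2,1}}\,dt'\Big),
\end{equation*}
with $V(t)=\int_0^t\|\nabla u\|_{B^{d/p}_{p,1}}$. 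The stress equation is a heat equation with damping, forced by $u\cdot\nabla\tau$, $\mathrm{Q}(\nabla u,\tau)$ and $\alpha\mathbb{D}u$; here Lemma \ref{heat} (inequality \eqref{heatg2}, $\beta=1$) controls $\|\tau\|_{\widetilde L^\infty_T(B^s_{2,1})\cap \widetilde L^1_T(B^{s+2}_{2,1})}$ in terms of $\|\tau_0\|_{B^s_{2,1}}$ plus the $L^1_T(B^s_{2,1})$ norms of the forcing. The key observation that makes the lifespan uniform in $\alpha$ is that the only $\alpha$-dependent term is $\alpha\mathbb{D}u$, and $\|\alpha\mathbb{D}u\|_{L^1_T(B^s_{2,1})}\leq \alpha\int_0^T\|u\|_{B^{s+1}_{2,1}}\leq \int_0^T\|u\|_{B^{s+1}_{2,1}}$ for $0\leq\alpha\leq 1$ (and for $\alpha>1$ one rescales $\tau$), so this term is dominated by a quantity already being controlled, uniformly in $\alpha$.

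Next I would close the a priori estimate. Writing $E(t):=\|u\|_{L^\infty_t(B^{s+1}_{2,1})}+\|\tau\|_{L^\infty_t(B^s_{2,1})}+\|\tau\|_{L^1_t(B^{s+2}_{2,1})}$, the product estimates of Corollary \ref{co1} give $\|u\cdot\nabla\tau\|_{B^s_{2,1}}\lesssim \|u\|_{B^{s+1}_{2,1}}\|\tau\|_{B^{s+1}_{2,1}}$ and $\|\mathrm{Q}(\nabla u,\tau)\|_{B^s_{2,1}}\lesssim\|u\|_{B^{s+1}_{2,1}}\|\tau\|_{B^s_{2,1}}$, and interpolation $\|\tau\|_{B^{s+1}_{2,1}}^2\lesssim\|\tau\|_{B^s_{2,1}}\|\tau\|_{B^{s+2}_{2,1}}$ lets the middle regularity be absorbed into the parabolic gain. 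Combining the two linear estimates yields a closed inequality of the form $E(t)\leq C E(0)+C\int_0^t (1+E(t'))E(t')\,dt'$ with $C$ independent of $\alpha$; a continuity/bootstrap argument (the abstract bootstrap principle of \cite{TT2}) then produces a time $T=T(u_0,\tau_0)>0$, depending only on $\|u_0\|_{B^{s+1}_{2,1}}+\|\tau_0\|_{B^s_{2,1}}$ and not on $\alpha$, on which $E(t)\leq 2CE(0)$. Passing to the limit in the approximation scheme (using compactness plus the uniform bounds) gives existence on $[0,T]$, and uniqueness follows from an energy estimate for the difference of two solutions at one regularity level lower, i.e. in $B^s_{2,1}\times B^{s-1}_{2,1}$, which is again a transport–heat estimate with a Gronwall closure.

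Finally, for the continuous-dependence statement I would apply exactly the same machinery to the difference $(\delta u,\delta\tau):=(u^n-u,\tau^n-\tau)$, which solves the linearized system
\begin{align*}
&\partial_t\delta u+(u^n\cdot\nabla)\delta u+\nabla\delta p=\mathrm{div}\,\delta\tau-(\delta u\cdot\nabla)u,\\
&\partial_t\delta\tau+(u^n\cdot\nabla)\delta\tau-\Delta\delta\tau+\delta\tau=-(\delta u\cdot\nabla)\tau-\big(\mathrm{Q}(\nabla u^n,\delta\tau)+\mathrm{Q}(\nabla\delta u,\tau)\big)+\alpha\mathbb{D}\delta u,
\end{align*}
with zero forcing in the interior but nonzero initial data $(u^n_0-u_0,\tau^n_0-\tau_0)$. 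Since the coefficients $u^n$ and the sources $u,\tau$ are uniformly bounded on $[0,T]$ by the a priori estimate, Lemma \ref{priori estimate} and Lemma \ref{heat} give $\|\delta u\|_{L^\infty_t(B^{s+1}_{2,1})}+\|\delta\tau\|_{L^\infty_t(B^s_{2,1})}\leq C(T)\big(\|u^n_0-u_0\|_{B^{s+1}_{2,1}}+\|\tau^n_0-\tau_0\|_{B^s_{2,1}}\big)$ after a Gronwall step, and the right side tends to $0$ by hypothesis. The main obstacle is not any single estimate but the bookkeeping that keeps every constant independent of $\alpha$: one must check that the damping term $+\tau$ in the stress equation only helps, and that the coupling term $\alpha\mathbb{D}u$ is genuinely subordinate to terms already present — this is what allows a single lifespan $T(u_0,\tau_0)$ to serve for all $\alpha\geq0$ simultaneously, which is precisely what Theorem \ref{Th3}(1) will need.
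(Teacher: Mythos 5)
The paper does not actually prove Theorem \ref{Unstability1}: it states it together with Lemma \ref{gj} and defers both to \cite{weikui1,weikui2}, so there is no line-by-line comparison to make. Your existence/uniqueness outline (approximation scheme, transport estimate for $u$ via Lemma \ref{priori estimate}, damped heat estimate for $\tau$ via \eqref{heatg2}, closing $E(t)$ by interpolation, uniqueness one level lower) is the standard route and is sound in outline; the observation that $\alpha\mathbb{D}u$ is subordinate to $\|u\|_{L^1_T(B^{s+1}_{2,1})}$ for bounded $\alpha$ is exactly the right point for uniformity of the lifespan (your parenthetical "rescale $\tau$ for $\alpha>1$" does not really work, since rescaling moves $\alpha$ onto $\mathrm{div}\,\tau$, but only $\alpha\in[0,\epsilon_0]$ is ever used downstream).

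The genuine gap is in the last step, the continuous dependence in the \emph{top} topology $L^\infty_t(B^{s+1}_{2,1})\times L^\infty_t(B^{s}_{2,1})$. You claim that Lemma \ref{priori estimate} and Lemma \ref{heat} applied to the difference system plus Gronwall give $\|\delta u\|_{L^\infty_t(B^{s+1}_{2,1})}\leq C(T)\|(\delta u_0,\delta\tau_0)\|$. This fails because of derivative loss: the source term $(\delta u\cdot\nabla)u$ in the $\delta u$-equation must be placed in $L^1_t(B^{s+1}_{2,1})$, and Corollary \ref{co1} gives
\begin{equation*}
\|(\delta u\cdot\nabla)u\|_{B^{s+1}_{2,1}}\lesssim \|\delta u\|_{L^\infty}\|\nabla u\|_{B^{s+1}_{2,1}}+\|\nabla u\|_{L^\infty}\|\delta u\|_{B^{s+1}_{2,1}},
\end{equation*}
whose first term requires $u\in B^{s+2}_{2,1}$ — one derivative more than the solution possesses (the $u$-equation is pure transport here, $\nu=0$, so there is no parabolic gain to borrow). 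The difference estimate therefore closes only at the lower level $B^{s}_{2,1}\times B^{s-1}_{2,1}$; this suffices for uniqueness but not for the stated convergence. Recovering convergence in the strong norm requires a Bona--Smith type argument: mollify the data by $S_j$, estimate the differences of the mollified (hence one-derivative-smoother) solutions at the top level, and control the two outer error terms uniformly — precisely the three-term splitting \eqref{3.18} that the authors themselves deploy in the proof of Theorem \ref{Th3}(1), and presumably what \cite{weikui1,weikui2} carry out. Without this extra layer your final Gronwall step is not valid as written.
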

\begin{lemma}\label{gj}
Let $(u_0,\tau_0)\in {B}^{s+1}_{2,1}\times{B}^{s}_{2,1}$ be the initial data of (\ref{gs2222}) with $s\geq 1$ and $a\geq 0$. If there exists an initial sequence $(u^j_0,\tau^j_0)\in {B}^{s+1}_{2,1}\times{B}^{s}_{2,1}$ such that $\|u^j_0-u_0\|_{{B}^{s+1}_{2,1}}+\|\tau^j_0-\tau_0\|_{{B}^{s}_{2,1}}\rightarrow 0\ (j\rightarrow\infty)$, then one can construct a lifespan $T^j$ corresponding to $(u^j_0,\tau^j_0)$ such that
$$T^j\rightarrow T,\quad\quad j\rightarrow\infty ,$$
where the lifespan $T$ corresponds to $(u_0,\tau_0)$. This implies that for these initial data $(u^{j}_0,b^{j}_0),~j\in\mathbb{N}\cup \{\infty\}$, there exists a common lifespan $\bar{T}:=\frac{1}{2}T(u_0,b_0)$ , which is independent of $j$ when $j$ is large.
\end{lemma}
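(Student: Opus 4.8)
The plan is to read off from the local well-posedness result (Theorem \ref{Unstability1}) that the guaranteed existence time is a continuous, strictly positive, non-increasing function of the size of the initial data measured in $B^{s+1}_{2,1}\times B^{s}_{2,1}$ — for the fixed value of $a$ at hand, and in fact uniformly for $a$ in any bounded set, matching the $\alpha$-independence asserted there — and then to conclude by a soft continuity argument. Concretely, the Friedrichs/contraction scheme used to prove Theorem \ref{Unstability1} for the coupled Euler--heat system \eqref{gs2222} closes via the transport estimates for $u$ in $B^{s+1}_{2,1}$, the heat estimate \eqref{heatg2} of Lemma \ref{heat} for $\tau$ in $B^{s}_{2,1}\cap L^1_t(B^{s+2}_{2,1})$, and the product rule of Corollary \ref{co1}; absorbing the forcing through $\|a\mathbb{D}u\|_{B^{s}_{2,1}}\le Ca\|u\|_{B^{s+1}_{2,1}}$, one obtains a Riccati-type differential inequality for $Y(t):=\|u\|_{L^\infty_t(B^{s+1}_{2,1})}+\|\tau\|_{L^\infty_t(B^{s}_{2,1})\cap L^1_t(B^{s+2}_{2,1})}$ whose blow-up time is an explicit continuous non-increasing function $L(M)>0$ of the datum size $M=\|u_0\|_{B^{s+1}_{2,1}}+\|\tau_0\|_{B^{s}_{2,1}}$, below which one has the quantitative bound $Y\le 2M$. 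We take $T=T(u_0,\tau_0):=L(M_0)$, with $M_0:=\|u_0\|_{B^{s+1}_{2,1}}+\|\tau_0\|_{B^{s}_{2,1}}$, as the lifespan in the statement.

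Granting this, I would set $M_j:=\|u^j_0\|_{B^{s+1}_{2,1}}+\|\tau^j_0\|_{B^{s}_{2,1}}$ and $T^j:=L(M_j)$. The hypothesis $\|u^j_0-u_0\|_{B^{s+1}_{2,1}}+\|\tau^j_0-\tau_0\|_{B^{s}_{2,1}}\to0$ together with the triangle inequality gives $M_j\to M_0$, hence $T^j=L(M_j)\to L(M_0)=T$ by continuity of $L$; this is the first claim. For the second claim, since $T^j\to T$ and $\tfrac12 T<T$ there is $J\in\mathbb{N}$ with $T^j>\tfrac12 T$ for every $j\ge J$, while the datum $(u_0,\tau_0)$ itself (the index $j=\infty$) has its solution on $[0,T]\supset[0,\tfrac12 T]$. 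Therefore every member of $\{(u^j_0,\tau^j_0):j\ge J\}\cup\{(u_0,\tau_0)\}$ generates a solution on the common interval $[0,\bar T]$ with $\bar T:=\tfrac12 T(u_0,\tau_0)$, which is the assertion of the lemma.

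The only point that is not purely formal — hence the main obstacle — is the content of the first paragraph: one must verify that the lifespan coming out of the iteration scheme of Theorem \ref{Unstability1} can indeed be written as a continuous function of the data norm and stays bounded below when $a$ varies over a bounded set (the latter because the coupling term contributes at most $Ca\|u\|_{B^{s+1}_{2,1}}$ to the estimate, with a prefactor bounded for bounded $a$). Everything downstream of that is elementary. I note in passing that if one wished $T$ to be the \emph{maximal} existence time of $(u_0,\tau_0)$ rather than the guaranteed local time, it would be enough to prove $\liminf_{j\to\infty}T^j_{\max}\ge T$ — a standard continuation argument using the blow-up criterion together with the finiteness of $\|u\|_{L^\infty_{T'}(B^{s+1}_{2,1})}+\|\tau\|_{L^\infty_{T'}(B^{s}_{2,1})\cap L^1_{T'}(B^{s+2}_{2,1})}$ for each $T'<T$ — and then to put $T^j:=\min(T^j_{\max},T)$; but this strengthening is not needed here.
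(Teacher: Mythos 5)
The paper does not actually prove Lemma \ref{gj}: immediately after stating it, the authors write that one can ``refer to \cite{weikui1,weikui2} and take the similar operators'' and explicitly omit the argument. Your proposal is therefore a reconstruction of the omitted standard proof, and it is the right one: the local existence time produced by the Friedrichs/contraction scheme behind Theorem \ref{Unstability1} is a continuous, strictly positive function $L(M)$ of the data size $M=\|u_0\|_{B^{s+1}_{2,1}}+\|\tau_0\|_{B^{s}_{2,1}}$ (uniformly for $a$ in a bounded set, since the forcing only contributes $Ca\|u\|_{B^{s+1}_{2,1}}$), so $M_j\to M_0$ gives $T^j=L(M_j)\to L(M_0)=T$, and the common lifespan $\bar T=\tfrac12 T$ follows for $j$ large. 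You correctly isolate the one non-formal ingredient (that $L$ is continuous and that the Riccati-type inequality closes via Lemma \ref{priori estimate}, Lemma \ref{heat} and Corollary \ref{co1} at the regularities $B^{s+1}_{2,1}\times(B^{s}_{2,1}\cap L^1_tB^{s+2}_{2,1})$), and your closing remark about the distinction between the guaranteed local time and the maximal existence time is exactly the caveat one should attach to the lemma's loose use of the word ``lifespan''; only the lower bound $T^j>\tfrac12 T$ is ever used downstream, so your reading is the correct one. I see no gap beyond the standard local well-posedness machinery that the paper itself takes for granted.
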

One can refer to \cite{weikui1,weikui2} and take the similar operators to obtain the above two theorem. we omit them here.
\par\noindent
\textbf{The proof of Theorem \ref{Th3}:}
\begin{proof}

(1)~To prove
$$\lim_{a\rightarrow 0}\|u^a-u^0\|_{L^{\infty}_T(B^2_{2,1})}+ \|\tau^a-\tau^0\|_{L^{\infty}_T(B^1_{2,1})}=0,$$
where $T$ is the local common lifespan in Theorem \ref{Unstability1} and Lemma \ref{gj}.
Our main idea is to estimate
\begin{align}\label{3.18}
\|u^a-u^0\|_{L^{\infty}_T(B^2_{2,1})}+ \|\tau^a-\tau^0\|_{L^{\infty}_T(B^1_{2,1})}
&\leq \|u^a-u^a_j\|_{L^{\infty}_T(B^2_{2,1})}+ \|\tau^a-\tau^a_j\|_{L^{\infty}_T(B^1_{2,1})}\notag\\
&\quad +\|u^a_j-u^0_j\|_{L^{\infty}_T(B^2_{2,1})}+ \|\tau^a_j-\tau^0_j\|_{L^{\infty}_T(B^1_{2,1})}\notag\\&
+\|u^0_j-u^0\|_{L^{\infty}_T(B^2_{2,1})}+ \|\tau^0_j-\tau^0\|_{L^{\infty}_T(B^1_{2,1})},
\end{align}
where $(u^a_j,\tau^a_j)$ is the local solution for \eqref{gs2222} with the initial data $(u^a_j,\tau^a_j)$ for every $a\geq 0$ ($j\in\mathbb{N}\cup\{\infty\}$), and $(u^a_{\infty},\tau^a_{\infty}):=(u^a,\tau^a)$.

Firstly, we estimate  $\|u^a_j-u^0_j\|_{L^{\infty}_T(B^2_{2,1})}+ \|\tau^a_j-\tau^0_j\|_{L^{\infty}_T(B^1_{2,1})}$ with fix $j$.
Recall the system:
\begin{equation}
\begin{cases}
\partial_tu^a_j+(u^a_j\cdot\nabla)u^a_j+\nabla p^a_j=\mathrm{div}(\tau^a_j),\\
\partial_t\tau^a_j+(u^a_j\cdot\nabla)\tau^a_j-\Delta\tau^a_j+\tau^a_j+\mathrm{Q}(\nabla u^a_j,\tau^a_j)=\alpha\mathrm{D}u^a_j,\\
u^a_j(0,x)=S_ju_0(x),~~~\tau^a_j(0,x)=S_j\tau_0(x).
\end{cases}
\end{equation}
By the local well-posedness in Theorem \ref{Unstability1} and Lemma \ref{gj} , for any $a,j\geq 0$, there exists a local lifespan $T(u_0,\tau_0)$ independent of $a$ and $j$ such that
\begin{align}\label{3.20}
\| u^a_j\|_{L^{\infty}_T(B^3_{2,1})}
+\|\tau^a_j\|_{L^{\infty}_T(B^2_{2,1})}
\leq C(\|S_ju_0\|_{B^3_{2,1}}
+\|S_j\tau_0\|_{L^{\infty}_T(B^2_{2,1})})
\leq C2^j(\|u_0\|_{B^2_{2,1}}
+\|\tau_0\|_{B^1_{2,1}}).
\end{align}

Then, we give the equation of
$(u^a_j-u^0_j,\tau^a_j-\tau^0_j)$:
\begin{align}\label{3.19}
\left\{
\begin{array}{ll}
(u^a_j-u^0_j)_{t}+u^a_j\nabla (u^a_j-u^0_j)+(u^a_j-u^0_j)\nabla u^0_j+\nabla (P^a_j-P^0_j)=div (\tau^a_j-\tau^0_j),\\[1ex]
(\tau^a_j-\tau^0_j)_{t}+(\tau^a_j-\tau^0_j)-\Delta (\tau^a_j-\tau^0_j)+u^a_j\nabla (\tau^a_j-\tau^0_j)+(u^a_j-u^0_j)\nabla\tau^0_j\\[1ex]
+Q(\nabla (u^a_j-u^0_j),~\tau^0_j)+Q(\nabla u^a_j,~(\tau^a_j-\tau^0_j))=\alpha\mathbb{D}u^a_j\\[1ex]
(u^a_j-u^0_j)(0,x)=0,~(\tau^a_j-\tau^0_j)(0,x)=0\\[1ex]
\end{array}
\right.
\end{align}
Applying Lemma \ref{heat}--\ref{priori estimate} and the Gronwall inequality, we obtain
\begin{align}\label{3.20}
\| u^a_j-u^{0}_j\|_{L^{\infty}_T(B^1_{2,1})}+\|\tau^a_j-\tau^{\infty}_j\|_{L^{\infty}_T(B^1_{2,1})}
&\leq 2^jC_{u_0,\tau_0}(\int_0^T\| u^a_j-u^{0}_j\|_{B^2_{2,1}}
\\
&~~~~+\|\tau^a_j-\tau^0_j\|_{B^1_{2,1}}
+a\|u_0\|_{B^2_{2,1}}dt)\notag\\
&\leq C_{u_0,\tau_0}e^{2^jC_{u_0,\tau_0}}a,\notag\\
&\rightarrow 0,~~~a\rightarrow \infty,~for~any~ fixed~ j.
\end{align}

Next, we estimate $\|u^a-u^0\|_{L^{\infty}_T(B^2_{2,1})}+ \|\tau^a-\tau^0\|_{L^{\infty}_T(B^1_{2,1})}$.~
By Theorem \ref{Unstability1}, we see that system \eqref{gs2222} is locally steady for any initial data. Since $\| u_0-S_ju_0\|_{B^2_{2,1}}+
\|\tau_0-S_j\tau_0\|_{B^1_{2,1}}\rightarrow 0,~j\rightarrow \infty$, so we have
\begin{align}\label{3.21}
\| u^a-u^a_j\|_{L^{\infty}_T(B^2_{2,1})}
+\|\tau^a-\tau^a_j\|_{L^{\infty}_T(B^1_{2,1})}
\rightarrow 0,~j\rightarrow \infty,~~for~any~a\geq 0.
\end{align}

Finally, combining \eqref{3.20} and \eqref{3.21}, we deduce that
$$\lim_{a\rightarrow 0}\|u^a-u^0\|_{L^{\infty}_T(B^1_{2,1})}+ \|\tau^a-\tau^0\|_{L^{\infty}_T(B^1_{2,1})}=0.$$
Indeed, for any $\frac{\epsilon}{3}$, by \eqref{3.21}, there exists a constant $N(\epsilon)$ such that when $j\geq N$, we have
$$\|u^a-u^a_j\|_{L^{\infty}_T(B^2_{2,1})}
+\|\tau^a-\tau^a_j\|_{L^{\infty}_T(B^1_{2,1})}\leq \frac{\epsilon}{3},~for~any~a\geq 0.$$
For this $j$, by \eqref{3.20}, there exists a constant $\delta$ ($\delta$ is actually dependent on $\epsilon$) such that when $0\leq a\leq \delta$, one have
$$\|u^a_j-u^0_j\|_{L^{\infty}_T(B^2_{2,1})}
+\|\tau^a_j-\tau^0_j\|_{L^{\infty}_T(B^1_{2,1})}\leq \frac{\epsilon}{3}.$$
Therefore, by \eqref{3.18} we obtain
\begin{align}
\|u^a-u^0\|_{L^{\infty}_T(B^2_{2,1})}+ \|\tau^a-\tau^0\|_{L^{\infty}_T(B^1_{2,1})}
&\leq \|u^a-u^a_j\|_{L^{\infty}_T(B^2_{2,1})}+ \|\tau^a-\tau^a_j\|_{L^{\infty}_T(B^1_{2,1})}\notag\\
&\quad +\|u^a_j-u^0_j\|_{L^{\infty}_T(B^2_{2,1})}+ \|\tau^a_j-\tau^0_j\|_{L^{\infty}_T(B^1_{2,1})}\notag\\
&\quad
+\|u^0_j-u^0\|_{L^{\infty}_T(B^2_{2,1})}+ \|\tau^0_j-\tau^0\|_{L^{\infty}_T(B^1_{2,1})}\notag\\
&\leq \frac{\epsilon}{3}+\frac{\epsilon}{3}+\frac{\epsilon}{3}\leq
\epsilon.
\end{align}

(2) Let  $\epsilon_0=\frac{1}{64(C^4+1)} $ be the fixed small constant in Theorem \ref{Th1} and Theorem \ref{Th2}. Recall the system:
\begin{equation}\label{th4-1}
\begin{cases}
\partial_tu+(u\cdot\nabla)u+\nabla p=\mathrm{div}(\tau),\\
\partial_t\tau+(u\cdot\nabla)\tau-\Delta\tau+\tau+\mathrm{Q}(\nabla u,\tau)=a\mathbb{D}u,~~a\in(0,\epsilon_0],\\
\mathrm{div} u=0,\\
u(x,0)=u_0(x),~~~\tau(0,x)=\tau_0(x),
\end{cases}
\end{equation}
and
\begin{equation}\label{th4-2}
\begin{cases}
\partial_tu+(u\cdot\nabla)u+\nabla p=\mathrm{div}(\tau),\\
\partial_t\tau+(u\cdot\nabla)\tau-\Delta\tau+\tau+\mathrm{Q}(\nabla u,\tau)=0,\\
\mathrm{div} u=0,\\
u(x,0)=u_0(x),~~~\tau(0,x)=\tau_0(x),
\end{cases}
\end{equation}
To prove the instability between \eqref{th4-1} and \eqref{th4-2} as $a\rightarrow 0$, we first give the definition of the global stability:
$$\lim_{a\rightarrow 0}\|u^0-u^a\|_{L^{\infty}_t(B^{2}_{2,1})}
 +\|\tau^0-\tau^a\|_{L^{\infty}_T(B^{1}_{2,1})}=0,~~\forall (u_0,\tau_0)\in \mathbb{A} ~and~\forall t\in [0,\infty),$$
 where $\mathbb{A}:=\{(u_0,\tau_0)\in (B^{2}_{2,1} (\mathbb{R}^2),B^{1}_{2,1} (\mathbb{R}^2))|~\eqref{th4-2}~and~\eqref{th4-1}~ has~a~unique~solution~for~any~fixed~ a\}.$

In order to prove the instability in large time, we should prove that
for any $a\in[0,\epsilon_0]$, there exists a common initial sequence~ $(u_0,\tau_0)(a)\in\mathbb{A}$ and a $T(a)$ such that, when $t\geq T(a)$, we have
$$\|(u^0-u^a)(t)\|_{B^{2}_{2,1}}\geq \frac{\epsilon_0}{8},
$$

Now, let $\phi\in \mathbb{S}^2$ with $\|\phi\|_{L^2}=1$ and $div\phi=0$. Set the initial data $$(u_0,\tau_0)(a)=\frac{\epsilon_0 a}{2}(\phi(ax), a^2\phi(ax)),$$  For any $0< a\leq \epsilon_0 $, we have
$$\|u_0\|_{L^2}=\frac{\epsilon_0}{2},~
\|\tau_0\|_{H^3}\leq C\epsilon_0a^2
~~and~~\|w_0\|_{B^0_{\infty,1}}
\leq\|w_0\|_{H^2}\leq C\epsilon^2_0,~~w_0=curlu_0.$$
These satisfy the conditions in Theorem \ref{Th1} and Theorem \ref{Th2}, which means $(u_0,\tau_0)\in\mathbb{A}$.

On one hand, by Theorem \ref{Th1}, \eqref{th4-1} has an unique global strong solution $(u^a,\tau^a)$ with the initial data $(u_0,\tau_0)(a)$. We also obtain the $L^2$ decay such that:
\begin{align}\label{4.00}
 \|(u^a,\tau^a)(t)\|_{L^2}\leq C(\|u_0\|_{H^1}+\|\tau_0\|_{H^1})[a(1+t)]^{-\frac{1}{2}}.
\end{align}

On the other hand, for the system \eqref{th4-2}, by Theorem \ref{Th2} we also obtain a unique global solution $(u^0,\tau^0)$ with the same initial data $(u_0,\tau_0)(a)$. Although coefficients of the system \eqref{th4-2} are independent of $a$, one can still look for the initial data which is dependent of $a$. Then, using the first equation of \eqref{th4-2}, We have
\begin{align}\label{4.0}
 \|u^0(t)\|^2_{L^2}\geq \|u_0\|^2_{L^2}-\int_0^t\|div\tau\|_{L^2}\|u\|_{L^2}.
 &\geq \|u_0\|^2_{L^{2}}-\|e^{-\frac{s}{2}}\|_{L^2_t}\|e^{\frac{s}{2}}div\tau\|_{L^2_t(L^2)}\|u\|_{L^{\infty}_t(L^2)}\notag\\
 &\geq \frac{\epsilon^2_0}{4}-C\|\tau_0\|_{L^2}(\|u_0\|_{L^2}+\|\tau_0\|_{L^2})\notag\\
 &\geq \frac{\epsilon^2_0}{4}-C\frac{\epsilon^3_0}{2} \notag\\
 &= \frac{\epsilon^2_0}{16},
\end{align}
where we use \eqref{x5} for the third inequality.

Therefore, there exists a $T(a):=\frac{1}{a^2}$ such that when $t\geq T(a)$, we have
 \begin{align}\label{4.1}
 \|u^0(t)-u^\alpha(t)\|_{L^{2}}&\geq \|u^0(t)\|_{L^{2}}-\|u^\alpha(t)\|_{L^{2}}\notag\\
 &\geq \frac{\epsilon_0}{4}
 -C(\|u_0\|_{H^1}+\|\tau_0\|_{H^1})[a(1+t)]^{-\frac{1}{2}}\notag\\
 &\geq \frac{\epsilon_0}{4}
 -C\epsilon_0[a(1+t)]^{-\frac{1}{2}}\notag\\
 &\geq \frac{\epsilon_0}{4}-\frac{\epsilon_0}{8} \notag\\
 &= \frac{\epsilon_0}{8},
\end{align}
where the second inequality is based on \eqref{4.00}. This complete the proof Theorem \ref{Th3}.
\par

\end{proof}

\section*{Acknowledgments}
This work is partially supported by the National Natural Science Foundation of China (Nos. 11801574, 11971485), Natural Science Foundation of Hunan Province (No. 2019JJ50788), Central South University Innovation-Driven Project for Young Scholars (No. 2019CX022) and Fundamental Research Funds for the Central Universities of Central South University, China (Nos. 2020zzts038, 2021zzts0041).

\par\noindent
\bibliographystyle{siam}
\bibliography{ChenYYreference}

\begin{thebibliography}{10}

\bibitem{CH}
{\sc H.~Bahouri, J.-Y. Chemin, and R.~Danchin}, {\em Fourier analysis and
  nonlinear partial differential equations}, vol.~343 of Grundlehren der
  Mathematischen Wissenschaften [Fundamental Principles of Mathematical
  Sciences], Springer, Heidelberg, 2011.

\bibitem{BCF}
{\sc R.~B. Bird, R.~C. Armstrong, and O.~Hassager}, {\em Dynamics of polymeric
  liquids: v. 1 fluid mechanics},  (1987).

\bibitem{Bourgain1}
{\sc J.~Bourgain and D.~Li}, {\em Strong ill-posedness of the incompressible
  {E}uler equation in borderline {S}obolev spaces}, Invent. Math., 201 (2015),
  pp.~97--157.

\bibitem{Bourgain2}
\leavevmode\vrule height 2pt depth -1.6pt width 23pt, {\em Strong ill-posedness
  of the incompressible {E}uler equation in borderline {S}obolev spaces}, Geom.
  Funct. Anal., 25 (2015), pp.~1--86.

\bibitem{CW}
{\sc F.~Charve and R.~Danchin}, {\em A global existence result for the
  compressible {N}avier-{S}tokes equations in the critical {$L^p$} framework},
  Arch. Ration. Mech. Anal., 198 (2010), pp.~233--271.

\bibitem{CL}
{\sc J.-Y. Chemin and N.~Lerner}, {\em Flot de champs de vecteurs non
  lipschitziens et \'{e}quations de {N}avier-{S}tokes}, J. Differential
  Equations, 121 (1995), pp.~314--328.

\bibitem{CM}
{\sc J.-Y. Chemin and N.~Masmoudi}, {\em About lifespan of regular solutions of
  equations related to viscoelastic fluids}, SIAM J. Math. Anal., 33 (2001),
  pp.~84--112.

\bibitem{CH2}
{\sc Q.~Chen and X.~Hao}, {\em Global well-posedness in the critical {B}esov
  spaces for the incompressible {O}ldroyd-{B} model without damping mechanism},
  J. Math. Fluid Mech., 21 (2019), pp.~Paper No. 42, 23.

\bibitem{CMZ}
{\sc Q.~Chen, C.~Miao, and Z.~Zhang}, {\em Global well-posedness for
  compressible {N}avier-{S}tokes equations with highly oscillating initial
  velocity}, Comm. Pure Appl. Math., 63 (2010), pp.~1173--1224.

\bibitem{CZL}
{\sc Z.~Chen, L.~Q. Liu, and Y.~W.~K. Qin, Dong~Dong}, {\em {Global Regularity
  for the incompressible Oldroyd-B model without damping mechanism}},
  NON-105700,  (2021).

\bibitem{PM}
{\sc P.~Constantin and M.~Kliegl}, {\em Note on global regularity for
  two-dimensional {O}ldroyd-{B} fluids with diffusive stress}, Arch. Ration.
  Mech. Anal., 206 (2012), pp.~725--740.

\bibitem{wjh8}
{\sc W.~J. Z.~J. Constantin, Peter and Z.~Yi}, {\em High reynolds number and
  high weissenberg number oldroyd-b model with dissipation}, J. Evol. Equ., 21
  (2021).

\bibitem{DR}
{\sc R.~Danchin}, {\em Global existence in critical spaces for compressible
  {N}avier-{S}tokes equations}, Invent. Math., 141 (2000), pp.~579--614.

\bibitem{deng2021global}
{\sc W.~Deng, Z.~Luo, and Z.~Yin}, {\em {Global solutions and large time
  behavior for some Oldroyd-B type models in $R^2$}}, arXiv e-prints,  (2021),
  p.~arXiv:2107.12029v1.

\bibitem{135}
{\sc T.~M. Elgindi and J.~Liu}, {\em Global wellposedness to the generalized
  {O}ldroyd type models in {$\Bbb{R}^3$}}, J. Differential Equations, 259
  (2015), pp.~1958--1966.

\bibitem{ER}
{\sc T.~M. Elgindi and F.~Rousset}, {\em Global regularity for some
  {O}ldroyd-{B} type models}, Comm. Pure Appl. Math., 68 (2015),
  pp.~2005--2021.

\bibitem{MR1633055}
{\sc E.~Fern\'{a}ndez-Cara, F.~Guill\'{e}n, and R.~R. Ortega}, {\em Some
  theoretical results concerning non-{N}ewtonian fluids of the {O}ldroyd kind},
  Ann. Scuola Norm. Sup. Pisa Cl. Sci. (4), 26 (1998), pp.~1--29.

\bibitem{GS1}
{\sc C.~Guillop\'{e} and J.-C. Saut}, {\em Existence results for the flow of
  viscoelastic fluids with a differential constitutive law}, Nonlinear Anal.,
  15 (1990), pp.~849--869.

\bibitem{GS2}
\leavevmode\vrule height 2pt depth -1.6pt width 23pt, {\em Global existence and
  one-dimensional nonlinear stability of shearing motions of viscoelastic
  fluids of {O}ldroyd type}, RAIRO Mod\'{e}l. Math. Anal. Num\'{e}r., 24
  (1990), pp.~369--401.

\bibitem{Lijfa}
{\sc Z.~Guo, J.~Li, and Z.~Yin}, {\em Local well-posedness of the
  incompressible {E}uler equations in $b^{\infty}_{\infty,1}$ and the inviscid
  limit of the {N}avier–{S}tokes equations}, J. Funct. Anal., 276 (2018),
  pp.~1361--1406.

\bibitem{139}
{\sc D.~Hu and T.~Leli\`evre}, {\em New entropy estimates for {O}ldroyd-{B} and
  related models}, Commun. Math. Sci., 5 (2007), pp.~909--916.

\bibitem{HY}
{\sc X.~Huo and W.~Yong}, {\em Global existence for viscoelastic fluids with
  infinite {W}eissenberg number}, Commun. Math. Sci., 15 (2017),
  pp.~1129--1140.

\bibitem{MR2165379}
{\sc F.-H. Lin, C.~Liu, and P.~Zhang}, {\em On hydrodynamics of viscoelastic
  fluids}, Comm. Pure Appl. Math., 58 (2005), pp.~1437--1471.

\bibitem{LM}
{\sc P.~L. Lions and N.~Masmoudi}, {\em Global solutions for some {O}ldroyd
  models of non-{N}ewtonian flows}, Chinese Ann. Math. Ser. B, 21 (2000),
  pp.~131--146.

\bibitem{OJ}
{\sc J.~G. Oldroyd}, {\em Non-{N}ewtonian effects in steady motion of some
  idealized elastico-viscous liquids}, Proc. Roy. Soc. London Ser. A, 245
  (1958), pp.~278--297.

\bibitem{TT2}
{\sc T.~Tao}, {\em Nonlinear dispersive equations}, vol.~106 of CBMS Regional
  Conference Series in Mathematics, Published for the Conference Board of the
  Mathematical Sciences, Washington, DC; by the American Mathematical Society,
  Providence, RI, 2006.
\newblock Local and global analysis.

\bibitem{MR4199961}
{\sc D.~Wei and Z.~Zhang}, {\em Global well-posedness for the 2-{D} {MHD}
  equations with magnetic diffusion}, Commun. Math. Res., 36 (2020),
  pp.~377--389.

\bibitem{WZ2}
{\sc Z.~J.~F. Wu, Jiahong}, {\em {Global regularity for the generalized
  incompressible Oldroyd-B model with only stress tentor dissipation in
  critical besov spaces}}, preprint,  (2021).

\bibitem{WZ}
\leavevmode\vrule height 2pt depth -1.6pt width 23pt, {\em {Global regularity
  for the generalized incompressible Oldroyd-B model with only velocity
  dissipation and no stress tensor damping}}, preprint,  (2021).

\bibitem{weikui1}
{\sc W.~Ye, W.~Luo, and Z.~Yin}, {\em The estimate of lifespan and local
  well-posedness for the non-resistive {MHD} equations in homogeneous {B}esov
  spaces}, (arXiv:2012.03489v1).

\bibitem{weikui2}
{\sc W.~Ye and Z.~Yin}, {\em Global well-posedness for the non-viscous {MHD}
  equations with magnetic diffusion in critical {B}esov spaces},
  (arXiv:2012.03489).

\bibitem{zhai}
{\sc X.~Zhai}, {\em Global solutions to the {$n$}-dimensional incompressible
  {O}ldroyd-{B} model without damping mechanism}, J. Math. Phys., 62 (2021),
  pp.~021503, 17.

\bibitem{ZYZ2}
{\sc X.~{Zhai}, Y.~{Dan}, and Y.~{Li}}, {\em {Global well-posedness and
  inviscid limits of the generalized Oldroyd type models}}, arXiv e-prints,
  (2021), p.~arXiv:2106.14785.

\bibitem{145}
{\sc Y.~Zhu}, {\em Global small solutions of 3{D} incompressible {O}ldroyd-{B}
  model without damping mechanism}, J. Funct. Anal., 274 (2018),
  pp.~2039--2060.

\bibitem{ZFZ}
{\sc R.~Zi, D.~Fang, and T.~Zhang}, {\em Global solution to the incompressible
  {O}ldroyd-{B} model in the critical {$L^p$} framework: the case of the
  non-small coupling parameter}, Arch. Ration. Mech. Anal., 213 (2014),
  pp.~651--687.

\end{thebibliography}
\end{document}